\crefname{assumption}{assumption}{assumptions}
\def\bbC{\mathbb{C}}
\def\bbO{\mathbb{O}}
\def\bbR{\mathbb{R}}
\def\bbU{\mathbb{U}}
\def\SL{S} 
\def\LOP{local $\bbR$-linear operator}
\def\QOP{restricted derivative operator}
\def\OP{\mathscr{L}} 
\def\lo{\mathbf{o}}
\def\calo{\mathcal{O}}
\def\bbR{\mathbb{R}}
\def\scrQ{\mathscr{Q}}
\def\cR{{\cal R}}
\def\what{\widehat}
\DeclareMathOperator{\czbl}{czbl}
\DeclareMathOperator{\diag}{diag}
\DeclareMathOperator{\tr}{tr}
\DeclareMathOperator{\tridiag}{tridiag}
\DeclareMathOperator{\UI}{ui}
\DeclareMathOperator{\Diag}{Diag}
\DeclareMathOperator{\F}{F}
\DeclareMathOperator{\HH}{H}
\DeclareMathOperator{\T}{T}
\newtheorem{assumption}{Assumption}
\newtheorem{theorem}{Theorem}
\newtheorem{lemma}{Lemma}
\theoremstyle{definition}
\newtheorem{example}{Example}
\numberwithin{equation}{section}
\newcommand{\twobytwo}[4]{
       \left[ \begin{array}{cc}
        #1 & #2  \\
        #3 & #4
           \end{array} \right] }
\newcommand{\ignore}[1]{}
\title{
Optimal Convergence Rate of Self-Consistent Field Iteration
for Solving Eigenvector-dependent Nonlinear Eigenvalue Problems
}
\author{
    Zhaojun Bai\thanks{Department of Computer Science, University of California, Davis, CA 95616, USA, {\tt zbai@ucdavis.edu}}
    \and Ren-Cang Li\thanks{ Department of Mathematics, University of Texas at Arlington, Arlington, TX 76019-0408, USA, {\tt rcli@uta.edu}}
    \and Ding Lu\thanks{Department of Mathematics, University of Kentucky, Lexington, KY 40506, USA, {\tt Ding.Lu@uky.edu}} 
}
\date{\today}
\begin{document}

\maketitle

\begin{abstract}
We present a comprehensive convergence analysis for Self-Consistent
Field (SCF) iteration to solve a class of nonlinear
eigenvalue problems with eigenvector-dependency (NEPv).
Using a tangent-angle matrix as an intermediate measure
for approximation error,
we establish new formulas for two fundamental quantities
that optimally characterize the local convergence of the plain SCF:
the local contraction factor and the local average contraction factor.
In comparison with previously established results,
new convergence rate estimates provide much sharper bounds on
the convergence speed.
As an application, we extend the convergence analysis
to a popular SCF variant -- the level-shifted SCF.
The effectiveness of the convergence rate estimates is
demonstrated numerically for NEPv arising from 
solving the Kohn-Sham equation in electronic structure calculation and
the Gross-Pitaevskii equation in the modeling of Bose-Einstein condensation.
\end{abstract}

\smallskip
\textbf{Key words.}
  nonlinear eigenvalue problem,
  self-consistent field iteration,
  convergence factor,
  level-shifted SCF

\smallskip
\textbf{AMS subject classifications.}
65F15, 65H17 


\section{Introduction}
We consider the following
nonlinear eigenvalue problem with eigenvector-dependency (NEPv):
find an orthonormal matrix $V\in \bbC^{n\times k}$, i.e.,
$V^{\HH}V = I_k$, and
a square matrix $\Lambda\in\bbC^{k\times k}$ satisfying
\begin{equation}\label{eq:nepv}
    H(V)V=V\Lambda,
\end{equation}
where $H\colon \bbC^{n\times k}\to \bbC^{n\times n}$ is a continuous
Hermitian matrix-valued function of $V$.
Necessarily, $\Lambda = V^{\HH} H(V) V$ and the eigenvalues 
of $\Lambda$ are $k$ eigenvalues of $H(V)$, often
either the $k$ smallest or largest ones. Our later analysis 
will focus on $\Lambda$ 
associated with the $k$ smallest eigenvalues of $H(V)$, 
but it works equally well for the case when
$\Lambda$ is associated with the $k$ largest ones. 
We assume throughout this paper 
that $H(V)$ is right-unitarily invariant in $V$, i.e.,
\begin{equation}\label{eq:univar}
H(VQ)=H(V) \quad \mbox{for any unitary $Q\in\bbU^{k\times k}$},
\end{equation}
where $\bbU^{k\times k}$ is the set of all $k\times k$ unitary matrix.
This property~\eqref{eq:univar} essentially says that 
NEPv~\eqref{eq:nepv} is eigenspace-dependent,
to be more precise.
However, we will adopt the notion of nonlinear eigenvalue problem 
with eigenvector-dependency commonly used in literature.
Furthermore,  the assumption~\eqref{eq:univar} implies that
if $(V,\Lambda)$ is a solution of  NEPv~\eqref{eq:nepv},
then so is  $(VQ,Q^{\HH}\Lambda Q)$ for any unitary $Q$.
We therefore view $V$ and $\widetilde V$
as an identical solution, if the two share 
a common range $\cR(V)=\cR(\widetilde V)$.


NEPv in the form of~\eqref{eq:nepv} arises frequently in
a number of areas of computational science and engineering.  
They are the discrete representations of
the Kohn-Sham equation of the density functional theory in electronic structure
calculations~\cite{Martin:2004,Szabo:2012},
and the Gross-Pitaevskii equation in modeling the ground state wave function
in a Bose-Einstein condensate~\cite{Bao:2004,Jarlebring:2014}.
In particular, $H(V) = \Phi(P)$, where $\phi$
is a Hermitian matrix-valued function of $P=VV^{\HH}$, known as 
the density matrix in the density functional 
theory~\cite{Martin:2004,Szabo:2012}.
NEPv have also long played important roles in the classical 
methods for data analysis, such as multidimensional scaling \cite{Meyer:1997}.
It has become increasingly popular recently in the fields of 
machine learning and network science, such as 
the trace ratio maximizations for dimensional 
reduction~\cite{Ngo:2012,Zhang:2014}, 
balanced graph cut~\cite{Jost:2014},
robust Rayleigh quotient maximization for 
handling data uncertainty~\cite{Bai:2018}, 
core-periphy detection in networks~\cite{tuhi:2019}, 
and orthogonal canonical correlation analysis \cite{zhwb:2020}. 
The unitary invariance~\eqref{eq:univar} holds in all those 
practical NEPv except few. 


The Self-Consistent Fields (SCF) iteration
is the most general and widely-used method to 
solve NEPv~\eqref{eq:nepv}.
SCF, first introduced in molecular quantum mechanics back 
to 1950s~\cite{Roothaan:1951}, serves as an entrance to 
all other approaches. 
Starting with an orthonormal matrix $V_0\in\bbU^{n\times k}$,
SCF computes iteratively $V_{i+1}$ and $\Lambda_{i+1}$ satisfying
\begin{equation}\label{eq:pscf}
H(V_i) V_{i+1} = V_{i+1}\Lambda_{i+1},
\quad\text{for}\quad i = 0,1,2,\dots,
\end{equation}
where $V_{i+1}\in\bbC^{n\times k}$ is orthonormal and
$\Lambda_{i+1}$ is a diagonal matrix consisting of the $k$ smallest
eigenvalues of $H(V_i)$.
Since unit eigenvectors associated with simple eigenvalues can 
differ by scalar factors of unimodular complex numbers
and those associated with multiple eigenvalues have even more freedom,
the iteration matrix $V_{i+1}$ cannot be uniquely defined.
But thanks to the property~\eqref{eq:univar}, the computed subspaces
$\cR(V_1),\cR(V_2),\dots $ are always the same,
provided the $k$th and $(k+1)$st eigenvalues of $H(V_i)$ are 
distinct at the $i$th
iteration. Because of this, SCF can be interpreted as an iteration
of subspaces of dimension $k$, i.e., elements in the Grassmann
manifold $\mathbf{Gr}(k,\bbC^n)$ of all $k$ dimensional subspaces of $\bbC^n$.


The procedure in~\eqref{eq:pscf} is an SCF in its simplest form,
also known as the plain SCF iteration.
In practice, such a procedure is prone to slow convergence 
and sometimes may not converge~\cite{Koutecky:1971}.
Therefore it has been a fundamental problem of 
intensive research for decades to understand when and how the plain SCF
would converge, so as to develop remedies to stabilize and 
accelerate the SCF iteration.


For the applications of solving the Kohn-Sham equation in physics and quantum
chemistry, the solution of the associated NEPv corresponds to the minimizer 
of an energy function.
In such context, optimization techniques can be employed  to establish
convergence results of SCF. 
A number of convergence conditions have been 
investigated~\cite{Cances:2000,Liu:2014,Liu:2015,Yang:2009}.
For solving general NEPv, one may view the plain SCF~\eqref{eq:pscf} 
as a simple fixed-point iteration.
Sufficient conditions for the fixed-point map being a contraction,
in terms of the sines of the canonical angles between subspaces,
has been studied in~\cite{Cai:2018}, where the authors revealed 
a convergence rate for SCF based on the Davis-Kahan Sin$\Theta$ 
theorem for eigenspace perturbation~\cite{Davis:1970}.
Another approach for the fixed-point analysis is to look at the 
spectral radius of
the Jacobian supermatrix of the fixed-point map, be it differentiable.
When $H(V)$ is a smooth function explicitly in the density matrix $P=VV^{\HH}$,
a closed-form expression of the Jacobian has been obtained in a
recent work~\cite{Upadhyaya:2018}. Similar analysis also appeared
in an earlier work \cite{Stanton:1981} by focusing on 
the Hartree--Fock equation.


What is often different in the existing convergence analysis
is the way of measuring the approximation error.
Since SCF is a subspace iteration,
how to assess the distance between two subspaces $\cR(V)$ and $\cR(V_*)$
is the key to the convergence analysis.
Various distance measures have been applied in the literature,
leading to different approaches of analysis and different types of convergence
results. In particular, 
the difference in density matrices in 2-norm is
used as a measure of distance in~\cite{Yang:2009}; 
A chordal 2-norm is used in~\cite{Liu:2014}; 
More recent work \cite{Cai:2018} turned to the sines 
of the canonical angles between subspaces;
The work~\cite{Cances:2000} as well as \cite{Upadhyaya:2018} though 
not explicitly specified, used the difference of density matrices 
in the Frobenius norm. We believe that those distance measures 
may not necessarily be the best to capture 
the intrinsic feature of the SCF iteration. 


The results presented in this paper is a refinement and extension 
of the previous ones in~\cite{Cai:2018,Liu:2014,Upadhyaya:2018,Yang:2009}.
We aim to provide a comprehensive and unified local convergence analysis 
of SCF.
Rather than resorting to a specific distance measure, our development is based
on the tangent-angle matrix, associated with the tangents of canonical 
angles of subspaces. Such matrices can precisely capture the error
recurrence of SCF when close to convergence,
and they can act as intermediate measurements,
by which various distance measures can be evaluated as needed.
Despite less popular than sines, the tangents of canonical angles
have also been used to assess the distance between subspaces,
and can lead to tighter bounds when 
applicable, see~\cite{Davis:1970,Zhu:2013}, and references therein.

The use of tangent-angle matrix allows us to take a closer 
examination at the local error recursion of SCF, leading 
to the following new contributions presented in this paper: 
\begin{enumerate}[(a)]
\item
A precise characterization for the local contraction
factor of SCF for both continuous and differentiable $H(V)$. 
This improves over the previous work~\cite{Cai:2018,Liu:2014,Yang:2009}, 
where only upper bounds of such a quantity were obtained.

\item
A closed-form formula for the local asymptotic average contraction factor 
of SCF in terms of the spectral radius of an underlying linear operator 
when $H(V)$ is differentiable.
The formula is optimal for providing 
a sufficient and almost necessary local convergence 
condition of SCF.  
It extends the previous work in~\cite{Stanton:1981,Upadhyaya:2018}
to general $H(V)$ functions, and has a compact
expression that is convenient to work with in both theory and computation.

\item
A new justifications for a commonly used 
level-shifting scheme for the stabilization and acceleration 
of SCF~\cite{Cances:2000}. A closed-form lower 
bound on the shifting parameter to guarantee local convergence
is obtained.
\end{enumerate}

The rest of the paper is organized as follows.
\Cref{sec:prelim} presents some preliminaries to set up basic definitions 
and assumptions.
\Cref{sec:tangent} introduces the tangent-angle matrix and
establishes the  recurrence relation of such matrices in consecutive SCF
iteration.
\Cref{sec:conv} is devoted to the local convergence theory 
of the plain SCF iteration.
\Cref{sec:ls} deals with the level-shifted SCF and its convergence.
Numerical illustrations are in~\Cref{sec:examples},
followed by conclusions in~\Cref{sec:conclusion}.


We follow the notation convention in matrix analysis:
$\bbR^{m\times n}$ and $\bbC^{m\times n}$ are the sets of 
$m\times n$ real and complex matrices, respectively,
and $\bbR^n=\bbR^{n\times 1}$ and $\bbC^n=\bbC^{n\times 1}$.
$\bbU^{m\times n}\subset \bbC^{m\times n}$ denotes the set of 
$m\times n$ complex orthonormal matrices.
$A$, $A^{\T}$ and $A^{\HH}$ are the transpose and  conjugate
transpose of a matrix or a vector $A$, respectively, and $\overline{A}$ takes entrywise conjugate.
$H_1\succeq H_2$ means that $H_1$ and $H_2$ are Hermitian 
matrices and $H_1-H_2$ is positive semi-definite.
For a matrix $H\in\bbC^{n\times n}$ known to have real eigenvalues only, 
$\lambda_i(H)$ is the $i$th eigenvalue of $H$ in the
ascending order, i.e., $\lambda_1(H)\leq \lambda_2(H)\leq\cdots\leq \lambda_n(H)$, and
$\lambda_{\min}(H)=\lambda_1(H)$ and
$\lambda_{\max}(H)=\lambda_n(H)$.
$\Diag(x)$ is a diagonal matrix formed by the vector $x$,
$\diag (X)$ is a vector consisting of the diagonal elements of a matrix $X$;
$\cR(X)$ is the range of $X$;
$\sigma(X)$ is the collection of all singular values of $X$.
$\Re(\cdot)$ and $\Im(\cdot)$ extract the real and imaginary parts of
a complex number and, when applied to a matrix/vector, 
they are understood in the elementwise sense.
Standard big-O and little-o notations in mathematical analysis are used:
for functions $f(x), g(x)\to 0$ as $x\to 0$,
write $f(x) = \calo(g(x))$ if $|f(x)|\leq c|g(x)|$ 
for some constant $c$ as $x\to 0$,
and write $f(x) = \lo(g(x))$ if $|f(x)|/|g(x)| \to 0$ as $x\to 0$.
Other notations will be explained at their first appearance.

\section{Preliminaries}\label{sec:prelim}

Throughout this paper, we denote by $V_*\in\bbU^{n\times k}$
a solution of  NEPv~\eqref{eq:nepv}. 
The eigen-decomposition of $H(V_*)$ is given by
\begin{equation}\label{eq:eigdec}
H(V_*)\,[V_*,V_{*\bot}] = [V_*,V_{*\bot}]\,\twobytwo{\Lambda_*}{}{}{\Lambda_{*\bot}},
\end{equation}
where $ [V_*,V_{*\bot}]\in \bbU^{n\times n}$ is unitary,
\[
\text{$\Lambda_*=\diag(\lambda_1,\dots,\lambda_k)$
and
$\Lambda_{*\bot}=\diag(\lambda_{k+1}, \dots, \lambda_n)$}
\]
are diagonal matrices containing the  eigenvalues
of $H(V_*)$ in the ascending order, i.e., $\lambda_i=\lambda_i(H(V_*))$.
We make the following assumption for the solution $V_*$ of 
NEPv~\eqref{eq:nepv} under consideration.

\begin{assumption}\label{ass:gap}
There is a positive eigenvalue gap:
\begin{equation}\label{eq:gap}
\delta_*:=\lambda_{k+1}(H(V_*))-\lambda_k(H(V_*)) > 0.
\end{equation}
\end{assumption}

Such an assumption,
which is commonly applied in the convergence analysis of
SCF guarantees the uniqueness of the eigenspace corresponding
to the $k$ smallest eigenvalues of 
$H(V_*)$~\cite{Cai:2018,Cances:2000,Liu:2014,Upadhyaya:2018,Yang:2009}.

\paragraph{Sylvester equation}
The following Sylvester equation in $X\in\bbC^{n\times k}$ will be needed in
our analysis
\begin{equation} \label{eq:sylvester}
\Lambda_{*\bot} X - X\Lambda_*=V_{*\bot}^{\HH}[H(V_*)-H(V)]V_*.
\end{equation}
Under~\Cref{ass:gap}, this equation has  a unique solution $X\equiv \SL(V)$
for each $V\in\bbU^{n\times k}$, given by
\begin{equation}\label{eq:xv}
\SL(V) = D(V_*) \odot \left(V_{*\bot}^{\HH}[H(V_*) - H(V)]V_*\right),
\end{equation}
where
\begin{equation}\label{eq:dvs}
D(V_*)\in\bbR^{(n-k)\times k}
\quad \text{with $D(V_*)_{ij} = (\lambda_{k+i}(H(V_*))-    \lambda_{j}(H(V_*)))^{-1}$},
\end{equation}
and $\odot$ denotes the Hadamard product, i.e. elementwise multiplication.

\paragraph{Unitarily invariant norm}
We denote by $\|\cdot\|_{\UI}$ a unitarily invariant norm,
which, besides being a matrix norm, also satisfies
the following two additional conditions:
\begin{enumerate}[(1)] 
    \item
        $\|XAY\|_{\UI} = \|A\|_{\UI}$ for any unitary matrices $X$ and $Y$;
    \item
        $\|A\|_{\UI} = \|A\|_2$ whenever $A$ is rank-1, where $\|\cdot\|_2$ is the
        spectral norm.
\end{enumerate}
It is well-known that $\|A\|_{\UI}$ is dependent
only on the singular values of $A$.
In this paper, we assume any $\|\cdot \|_{\UI}$ we use
is applicable to matrices of all sizes in
a compatible way, i.e., $\|A\|_{\UI} =\|B\|_{\UI}$ for $A$, $B$ sharing
a same set of non-zero singular values 
(see, e.g., \cite[Thm 3.6, pp 78]{Stewart:1990}).
The spectral norm $\|\cdot\|_2$ and Frobenius norm $\|\cdot \|_{\F}$
are two particular examples of such unitarily invariant norms.

\paragraph{Canonical angles between subspaces}
Let $X, Y\in\bbU^{n\times k}$.
The $k$ canonical angles between the range spaces of $\mathcal X=\cR(X)$ and
$\mathcal Y = \cR(Y)$ are defined as
\begin{equation}\label{eq:indv-angles-XY}
0\le\theta_j(\mathcal X,\mathcal Y):=\arccos\sigma_j\le\frac {\pi}2\quad\mbox{for $1\le j\le k$},
\end{equation}
where $\sigma_1\ge\cdots\ge\sigma_k$ are singular values of
the matrix $Y^{\HH}X$ (see, e.g.,~\cite[Sec 4.2.1]{Stewart:1990}).
Put $k$ canonical angles all together to define
\begin{equation}\label{eq:mat-angles-XY}
    \Theta(\mathcal X,\mathcal Y)=\diag(\theta_1(\mathcal X,\mathcal
    Y),\ldots,\theta_k(\mathcal X,\mathcal Y)).
\end{equation}
Since the canonical angles so defined are independent of the basis matrices $X$ and
$Y$,  for convenience, we use the notation
$\Theta({X},{Y})$ interchangeably with $\Theta(\mathcal X,\mathcal Y)$.

Canonical angles provide a natural distance measure for subspaces.
For any unitarily invariant norm $\|\cdot\|_{\UI}$,
it holds that both $\|\Theta({X},{Y})\|_{\UI}$ and $\|\sin\Theta({ X},{ Y})\|_{\UI}$ are
unitarily invariant metrics on the Grassmann manifold $\mathbf {Gr}(k,\bbC^n)$
(see e.g.,~\cite[Thm. 4.10, pp 93]{Stewart:1990} and~\cite{Qiu:2005}).
In our analysis, the tangents of canonical angles will play an important role.
By trigonometric function analysis,
tangents provide good approximation to the canonical angles as 
$\Theta(X,Y)\to 0$:
\begin{equation}\label{eq:ts}
    \tan\Theta({X},{Y}) = \Theta({X},{Y}) + \calo(\|\Theta({X},{Y})\|_{\UI}^3).
\end{equation}

\paragraph{$\bbR$-linear mapping}
A mapping $\OP\colon \bbC^{n\times k}\to \bbC^{p\times q}$ is called
$\bbR$-linear, if it satisfies
\begin{equation}\label{eq:rlinear}
    \OP(X+Y) = \OP(X) + \OP(Y)
    \quad\text{and}\quad
    \OP(\alpha\,  X) = \alpha\,  \OP(X)
\end{equation}
for all $X, Y\in\mathbb C^{n\times k}$ and $\alpha\in\mathbb R$.
When we talk about an $\mathbb R$-linear mapping,
the complex matrix space $\bbC^{m\times n}$ is viewed as a vector space over
the field $\bbR$ of real numbers, denoted by $\bbC^{m\times n}(\bbR)$.
By elementary linear algebra, $\bbC^{m\times n}(\bbR)$ is a $(2mn)$-dimensional
inner product space, equipped with 
the inner product $\langle X,Y\rangle:= \Re \tr(X^{\HH}Y)$
and the induced norm $\|X\|_{\F} = \left(\Re \tr(X^{\HH}X)\right)^{1/2}$. 
We can see that
$\OP\colon \bbC^{n\times k}(\bbR)\to \bbC^{p\times q}(\bbR)$
is a linear mapping (over $\bbR$).
For convenience, we use $\mathbb C^{n\times k}$ and $\mathbb C^{n\times
k}(\bbR)$ interchangeably in future discussions when referring
to an $\bbR$-linear mapping.

The \emph{spectral radius} of
an $\bbR$-linear operator $\OP: \bbC^{n\times k} \to \bbC^{n\times k}$
is defined as the largest eigenvalue in magnitude of a 
matrix representation
$\mathbf L\in\bbR^{(2nk)\times (2nk)}$ of $\OP$: 
\begin{equation}\label{eq:sprad}
\rho(\OP) := \max \left\{\ |\lambda|\colon \mathbf L \, \mathbf x =
    \lambda \,  \mathbf x,\, \mathbf x\in\bbC^{2nk}\ \right\}.
\end{equation}
Notice that $\mathbf x$ is allowed to be a complex vector,
because a real matrix can have complex eigenvalues.
Here we do not make any assumption on the basis used to obtain $\mathbf L$,
the choice of the basis does not affect the spectrum of $\mathbf L$, 
therefore $\rho(\OP)$.

\paragraph{Derivative operator}
Let $V=V_r+\imath V_i\in \bbC^{n\times k}$ with
$V_r,V_i\in\bbR^{n\times k}$ being the real and
imaginary parts of $V$, respectively.
A Hermitian matrix-valued function $H(V)$ is called differentiable, if each
element $h_{ij}(V)$ is a smooth function in the real and imaginary parts
$(V_r,V_i)$ of $V$.
Such differentiability is different from the one in the
holomorphic sense, which generally cannot hold for $H(V)$ with real
diagonal elements.
For  $H(V)$ differentiable at $V_*$, 
we can define a derivative operator
\begin{equation}\label{eq:dh}
    \text{ $\mbox{\bf D}H(V_*)[\cdot ]\colon \bbC^{n\times k} \to
    \bbC^{n\times n}$ \quad with\quad }
    \mbox{\bf D}H(V_*)[X] = \left[\frac{d}{dt} H(V_*+t
    X)\right]_{t=0},
\end{equation}
where $t\in\bbR$.
$\mbox{\bf D}H(V_*)[X]$ represents the
derivative of $H(V)$ at $V_*$,
in the direction of $X\in\mathbb C^{n\times k}$.
A direct verification shows that
$\mbox{\bf D}H(V_*)[\cdot ]$
is an $\bbR$-linear mapping satisfying \eqref{eq:rlinear}.

By Taylor's expansion of $H(V_*+tX)$ at $t=0$,
as $V$ close to $V_*$ (in the Euclidean sense),
it holds
\begin{equation}\label{eq:taylor}
H(V) = H(V_*) + \mbox{\bf D}H(V_*)[V - V_*] + o (\|V-V_*\|_2).
\end{equation}
Therefore, $\mbox{\bf D}H(V_*)[\cdot ]$
is the Fr\'echet derivative of
$H: \bbC^{n\times k}(\bbR)\to \bbC^{n\times n}(\bbR)$.
Note that the expansion~\eqref{eq:taylor} does not take into account
the unitary invariance \eqref{eq:univar} of $H(V)$, and that is
why the remainder term is in the Euclidean difference $V-V_*$.

\section{Tangent-angle matrix}\label{sec:tangent}

Let $V\in \bbU^{n\times k}$ be an approximation to the solution $V_*$ of
NEPv~\eqref{eq:nepv}. Each $V$ represents an orthonormal basis matrix 
of a subspace.
As far as a solution of NEPv~\eqref{eq:nepv} is concerned, it is the subspaces
that matter.
To assess the distance of $V$ to the solution  $V_*$ in
terms of the subspaces their columns span,
we define the {\em tangent-angle matrix} from $V$ to $V_*$ as
\begin{equation} \label{eq:tv}
T(V) := (V_{*\bot}^{\HH}V)(V_*^{\HH} V)^{-1}\in  \bbC^{(n-k)\times k},
\end{equation}
provided $V_*^{\HH} V$ is invertible. 
By definition, $T(V)$ can be viewed as a function of 
$\bbU^{n\times k} \to \bbC^{(n-k)\times k}$.
The name of `tangent-angle matrix' comes from
the fact that
\begin{equation}\label{eq:tmat}
    \|\tan\Theta(V,V_*)\|_{\UI} = \|(V_{*\bot}^{\HH}V)(V_*^{\HH} V)^{-1}\|_{\UI}
    = \|T(V)\|_{\UI},
\end{equation}
for all unitarily invariant norms.
Recall that
the unitarily invariant norm
$\|A\|_{\UI}$ is defined by the singular values of $A$,
equation~\eqref{eq:tmat} is a direct consequence of the 
identity of singular values
$
\sigma(\tan \Theta(V,V_*)) = \sigma\left((V_\perp^{\HH} V_*)(V^{\HH}
V_*)^{-1}\right)$,
which follows from the definition of canonical angles in~\eqref{eq:mat-angles-XY}
(see, e.g., \cite[Thm. 2.2, 2.4, Chap 4]{Stewart:2001} and~\cite{Zhu:2013}).
The tangents of canonical angles have long been used in numerical matrix 
analysis,
and we refer to~\cite{Zhu:2013} and references therein.

By definition~\eqref{eq:indv-angles-XY},
the singular values of $V_*^{\HH}V$ consist of
those of the matrix
$\cos\Theta(V,V_*) = I + \calo(\|\Theta(V,V_*)\|_{\UI}^2)$.
Therefore, it can be seen from~\eqref{eq:tmat} that $T(V)$ 
is well defined for sufficiently small canonical angles $\Theta(V,V_*)$.  
Meanwhile, $\Theta(V,V_*)\to 0$ iff $T(V)\to 0$.
By the unitary invariance \eqref{eq:univar} and 
the continuity of $H(V)$,
we have $H(V)\to H(V_*)$ as the tangent-angle matrix $T(V)\to 0$.
This is more precisely described in the following lemma.

\begin{lemma}\label{lem:hv}
Let $V\in\bbU^{n\times k}$. Then as $T(V)\to 0$, it holds that
\begin{equation}\label{eq:uninvar}
H(V) = H\big( V_* + V_{*\bot}T(V) + \calo(\|T(V)\|_{\UI}^2)\big).
\end{equation}
If $H(V)$ is also differentiable, then
\begin{equation}\label{eq:diff}
H(V)  =  H(V_*) + {\bf D}H(V_*)[V_{*\bot}T(V)] + \lo(\|T(V)\|_{\UI}).
\end{equation}
\end{lemma}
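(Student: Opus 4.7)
The plan is to reduce both identities to a single structural decomposition of $V$, after which the unitary invariance \eqref{eq:univar} does almost all the work and a Taylor expansion finishes the differentiable case.

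First I would write $V$ in the basis $[V_*,V_{*\bot}]$:
\[
    V = V_* A + V_{*\bot} B, \qquad A := V_*^{\HH}V,\ B := V_{*\bot}^{\HH}V,
\]
so that by definition $B = T(V)A$ and hence $V = (V_* + V_{*\bot}T(V))A$. Orthonormality $V^{\HH}V = I_k$ translates into
$A^{\HH}(I_k + T(V)^{\HH}T(V))A = I_k$, which says that $M^{1/2}A$ is unitary for $M := I_k + T(V)^{\HH}T(V)\succ 0$; that is, $A = M^{-1/2}Q$ for some $Q\in\bbU^{k\times k}$. Consequently
\[
    VQ^{\HH} = (V_* + V_{*\bot}T(V))\,(I_k + T(V)^{\HH}T(V))^{-1/2}.
\]
A Neumann/binomial expansion gives $(I_k + T(V)^{\HH}T(V))^{-1/2} = I_k + \calo(\|T(V)\|_{\UI}^2)$ as $T(V)\to 0$, so
\[
    VQ^{\HH} = V_* + V_{*\bot}T(V) + \calo(\|T(V)\|_{\UI}^2).
\]
Applying \eqref{eq:univar} yields $H(V) = H(VQ^{\HH})$, which is exactly \eqref{eq:uninvar}.

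For \eqref{eq:diff}, I would set $W := VQ^{\HH}$, observe that $W \to V_*$ as $T(V)\to 0$, and plug into the Fr\'echet expansion \eqref{eq:taylor}:
\[
    H(V) = H(W) = H(V_*) + \mathbf{D}H(V_*)[W - V_*] + \lo(\|W-V_*\|_2).
\]
Since $W - V_* = V_{*\bot}T(V) + \calo(\|T(V)\|_{\UI}^2)$, $\bbR$-linearity of $\mathbf{D}H(V_*)[\cdot]$ splits the derivative term into $\mathbf{D}H(V_*)[V_{*\bot}T(V)]$ plus $\calo(\|T(V)\|_{\UI}^2)$ (using that every $\bbR$-linear map on a finite-dimensional space is bounded). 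Since $V_{*\bot}$ is an isometry, $\|W-V_*\|_2$ is of order $\|T(V)\|_{\UI}$, so the $\lo$-term is $\lo(\|T(V)\|_{\UI})$, giving \eqref{eq:diff}.

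The only mildly subtle step is step (i): making sure the unitary $Q\in\bbU^{k\times k}$ exists and lies in the invariance group of $H$. This is safe because $T(V)\to 0$ forces $A = V_*^{\HH}V$ to approach a unitary and therefore to be invertible, so the polar-type factorization $A = M^{-1/2}Q$ is well defined. The rest is careful order-of-magnitude bookkeeping; there is no hidden analytic obstacle, and no specific unitarily invariant norm is favored because all such norms on a fixed matrix size are equivalent, which is what lets the $\calo$ and $\lo$ symbols pass freely between $\|\cdot\|_2$ and $\|\cdot\|_{\UI}$.
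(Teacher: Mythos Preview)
Your proof is correct and follows essentially the same approach as the paper: both extract a unitary right-factor $Q$ of $V$ so that $VQ^{\HH}=V_*+V_{*\bot}T(V)+\calo(\|T(V)\|_{\UI}^2)$, then invoke \eqref{eq:univar} and the Taylor expansion \eqref{eq:taylor}. The only cosmetic difference is that the paper reaches the unitary via the singular values $\cos\Theta(V,V_*)$ of $V_*^{\HH}V$, whereas you obtain it directly from the identity $A^{\HH}(I_k+T^{\HH}T)A=I_k$; your route is arguably a bit cleaner since it never detours through $\Theta$.
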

\begin{proof}
The singular values of $V_*^{\HH}V$ consist of
$\cos\Theta(V,V_*) = I + \calo(\|\Theta(V,V_*)\|_{\UI}^2) $.
So we have
$V_*^{\HH}V = W + \calo(\|\Theta(V,V_*)\|_{\UI}^2)$ for some
unitary $W \in\bbU^{k\times k}$. 
It follows that
\begin{equation} \label{eq:vw}
VW^{-1} = V(V_*^{\HH}V)^{-1} + \calo(\|\Theta(V,V_*)\|_{\UI}^2)
    = V_* + V_{*\bot}\,  T(V) + \calo(\|T(V)\|_{\UI}^2),
\end{equation} 
where we used $V= V_*(V_*^{\HH}V)+V_{*\bot}(V_{*\bot}^{\HH}V)$
and $T(V) = \calo(\|\Theta(V,V_*)\|_{\UI})$ in the last equation.
The unitary invariance property $H(V) = H(VW^{-1})$ leads to~\eqref{eq:uninvar}.
Combining \eqref{eq:vw} with~\eqref{eq:taylor}, 
we obtain~\eqref{eq:diff}.
\end{proof}

The following lemma, which is the key to establishing our local convergence
results, describes the relation between the tangent-angle matrices of
two consecutive SCF iterations.

\begin{lemma}\label{lem:onestep}
Suppose Assumption~\ref{ass:gap} holds.
Let $\widetilde V$ be an orthonormal basis matrix associated with 
the $k$ smallest eigenvalues of $H(V)$,
and let $\SL(V)$ be the unique solution of the Sylvester equation
defined in~\eqref{eq:xv}.
Then 
\begin{enumerate}[{\rm (a)}]
\item \label{item:tx} 
$\SL(V)\to 0$ as $T(V)\to 0$;

\item \label{item:tv}
the tangent-angle matrix $T(\widetilde V)$ of $\widetilde V$ satisfies
            \begin{equation}\label{eq:onestep}
                T(\widetilde V) = \SL(V) + \lo( \|\SL(V)\|_{\UI});
            \end{equation}

\item \label{item:ls}
            if $H(V)$ is differentiable at $V_*$, then
            \begin{equation}\label{eq:onestepdiff}
                T(\widetilde V) = \OP\left(T(V)\right) + \lo(\|T(V)\|_{\UI}),
            \end{equation}
            where $\OP:\mathbb C^{(n-k)\times k}\to \mathbb C^{(n-k)\times k}$ defined by
\begin{equation}\label{eq:ls}
\OP(Z) =  D(V_*)\odot (V_{*\bot}^{\HH} \, \mbox{\bf D}H(V_*)[V_{*\bot} Z]\,  V_*)
\end{equation}
is an $\mathbb R$-linear operator, called the {\em\LOP} of the plain SCF.
    \end{enumerate}
\end{lemma}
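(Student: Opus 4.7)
The proof splits naturally into the three items. Throughout write $\Delta H := H(V) - H(V_*)$, and invoke Lemma~\ref{lem:hv} to relate $V$, $T(V)$, and $\Delta H$.

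For part~(a), the explicit formula~\eqref{eq:xv} displays $\SL(V)$ as a fixed entrywise-linear operator applied to $V_{*\bot}^{\HH}[H(V_*) - H(V)] V_*$. By Lemma~\ref{lem:hv}, continuity of $H$ together with the unitary invariance~\eqref{eq:univar} gives $H(V) \to H(V_*)$ as $T(V) \to 0$, so $\Delta H \to 0$ and hence $\SL(V) \to 0$. This is essentially a one-line continuity argument.

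For part~(b) --- the core of the lemma --- I would convert the eigenproblem defining $\widetilde V$ into a Riccati equation whose leading linear part is exactly the Sylvester equation~\eqref{eq:sylvester}. Writing $\widetilde V = V_* A + V_{*\bot} B$ in the orthonormal basis $[V_*, V_{*\bot}]$ gives $T(\widetilde V) = BA^{-1}$. Substituting into $H(V)\widetilde V = \widetilde V \widetilde\Lambda$, eliminating $\widetilde\Lambda$ from the top block row and inserting into the bottom block row, I obtain
\begin{equation*}
\Lambda_{*\bot}\, T(\widetilde V) - T(\widetilde V)\, \Lambda_* \;=\; V_{*\bot}^{\HH}[H(V_*) - H(V)] V_* \;+\; R,
\end{equation*}
where $R$ collects the nonlinear cross-terms, each carrying at least one block of $\Delta H$ paired with at least one factor of $T(\widetilde V)$; in particular $\|R\|_{\UI} = \calo(\|\Delta H\|_2\, \|T(\widetilde V)\|_{\UI})$. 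Under Assumption~\ref{ass:gap} the Sylvester operator on the left is invertible, with inverse acting entrywise by $D(V_*)$ in the eigenbasis, so
\begin{equation*}
T(\widetilde V) \;=\; \SL(V) \;+\; (\text{bounded Sylvester inverse of } R).
\end{equation*}
To sharpen this to the claimed $\lo(\|\SL(V)\|_{\UI})$ remainder, I would argue that $\|T(\widetilde V)\|_{\UI}$ and $\|\Delta H\|_2$ are both $\calo(\|\SL(V)\|_{\UI})$ as $T(V) \to 0$: the first via a Davis--Kahan type perturbation bound driven by $\|\Delta H\|_2$, and the second by Lemma~\ref{lem:hv} combined with the fact that the Hadamard factor $D(V_*)$ has entries bounded above and below by constants depending on $\delta_*$ (Assumption~\ref{ass:gap}), so $\|\SL(V)\|_{\UI}$ and $\|V_{*\bot}^{\HH}\Delta H V_*\|_{\UI}$ are comparable to leading order. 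Plugging these back makes $R = \lo(\|\SL(V)\|_{\UI})$.

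For part~(c), differentiability lets me substitute the Fr\'echet expansion~\eqref{eq:diff} into~\eqref{eq:xv}: only the linear term of $\Delta H$ survives at leading order, and it reproduces the operator $\OP(\cdot)$ of~\eqref{eq:ls}, so
\begin{equation*}
\SL(V) \;=\; \OP(T(V)) \;+\; \lo(\|T(V)\|_{\UI}).
\end{equation*}
Chaining this with part~(b) gives~\eqref{eq:onestepdiff}. The $\bbR$-linearity of $\OP$ is inherited from that of $\mathbf{D}H(V_*)[\cdot]$ composed with the fixed linear actions $Z \mapsto V_{*\bot}Z$, $Y \mapsto V_{*\bot}^{\HH} Y V_*$, and the Hadamard product with the constant $D(V_*)$.

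The main obstacle I anticipate is the quantitative matching at the end of~(b): upgrading the formal Sylvester-inverse identity to a sharp $\lo(\|\SL(V)\|_{\UI})$ requires that both $T(\widetilde V)$ and $\Delta H$ be controlled by $\|\SL(V)\|_{\UI}$ to the same order, not merely by $\|T(V)\|$ or $\|V - V_*\|$. This tight order-matching is precisely what separates the sharp identity~\eqref{eq:onestep} from the coarser perturbation bounds used in earlier work, and it is what makes the lemma strong enough to drive the optimal contraction-factor analysis of the subsequent sections.
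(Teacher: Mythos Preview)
Your overall architecture matches the paper's: part~(a) by continuity via Lemma~\ref{lem:hv}, part~(b) by deriving a Riccati equation for $T(\widetilde V)$ whose linear part is the Sylvester equation~\eqref{eq:sylvester}, and part~(c) by substituting the Fr\'echet expansion~\eqref{eq:diff} into $\SL(V)$ and chaining with~(b). The paper parameterizes $\widetilde V$ using the Stewart--Sun form and arrives at the same Riccati-type identity $T(\widetilde V) = \SL(V) + \Phi(T(\widetilde V))$ with $\|\Phi(Z)\|\le \delta_*^{-1}\|\Delta H\|\,(2\|Z\|+\|Z\|^2)$, which is exactly your remainder $R$ after applying the bounded Sylvester inverse.

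There is, however, a genuine gap in your finishing argument for~(b). You assert that $\|\Delta H\|_2 = \calo(\|\SL(V)\|_{\UI})$, justified by the comparability of $\|\SL(V)\|_{\UI}$ and $\|V_{*\bot}^{\HH}\Delta H\, V_*\|_{\UI}$. But the off-diagonal block $V_{*\bot}^{\HH}\Delta H\, V_*$ does not control the full $\|\Delta H\|_2$: for merely continuous $H$ the diagonal blocks $V_*^{\HH}\Delta H\, V_*$ and $V_{*\bot}^{\HH}\Delta H\, V_{*\bot}$ can dominate, so the claimed bound fails in general. Your Davis--Kahan step for $\|T(\widetilde V)\|_{\UI}$, being ``driven by $\|\Delta H\|_2$'', then inherits this defect. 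The paper's route is both simpler and correct: since $\|\Delta H\|\to 0$ as $T(V)\to 0$, the remainder is already $\lo(\|T(\widetilde V)\|_{\UI})$; feeding this back into $T(\widetilde V)=\SL(V)+\lo(\|T(\widetilde V)\|_{\UI})$ gives $\|T(\widetilde V)\|_{\UI}\le \|\SL(V)\|_{\UI}/(1-\lo(1))$, hence $\|T(\widetilde V)\|_{\UI}=\calo(\|\SL(V)\|_{\UI})$ and the remainder becomes $\lo(\|\SL(V)\|_{\UI})$. No bound of $\|\Delta H\|_2$ by $\|\SL(V)\|_{\UI}$ is needed, nor is one available.
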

\begin{proof}
For item~\ref{item:tx},  
by~\eqref{eq:uninvar} and the continuity of $H$,
it holds $H(V) \to H(V_*)$ as $T(V)\to 0$.
Hence, $\SL(V)\to 0$ by the definition of $\SL(V)$.

For item~\ref{item:tv},
we begin with the eigen-decomposition of $H(V)$:
\[
    H(V) \begin{bmatrix} \widetilde V & \widetilde V_{\bot} \end{bmatrix}
    =
    \begin{bmatrix} \widetilde V& \widetilde V_{\bot} \end{bmatrix}
    \begin{bmatrix} \widetilde \Lambda & \\ &\widetilde \Lambda_{\bot}
    \end{bmatrix},
\]
where $ \begin{bmatrix} \widetilde V & \widetilde V_{\bot}
\end{bmatrix}\in\bbU^{n\times n}$ is unitary,
$\widetilde \Lambda=\diag(\widetilde \lambda_1,\dots, \widetilde \lambda_k)$ and
$\widetilde \Lambda_{\bot}=\diag(\widetilde \lambda_{k+1},\dots, \widetilde \lambda_n)$ with
$\widetilde \lambda_i=\lambda_i(H(V))$.
Due to \Cref{ass:gap}, as $H(V) \to H(V_*)$,
we can apply the standard perturbation analysis of eigenspaces~\cite[Sec.~V.2]{Stewart:1990} to obtain
\begin{equation}\label{eq:paras:VtV}
    \begin{bmatrix} \widetilde V & \widetilde V_{\bot} \end{bmatrix}
        =
    \begin{bmatrix} V_* & V_{*\bot} \end{bmatrix}
    \begin{bmatrix} I_k & - Z^{\HH} \\ Z & I_{n-k}\end{bmatrix}
    \begin{bmatrix} (I_k+Z^{\HH}Z)^{-1/2} & \\ & (I_{n-k}+ZZ^{\HH})^{-1/2}
    \end{bmatrix}
    \begin{bmatrix}
        Q & \\ & P
    \end{bmatrix},
\end{equation}
where $Z\in\bbR^{(n-k)\times k}$,
$Q\in\bbU^{k\times k}$, and $P\in\bbU^{(n-k)\times (n-k)}$
are parameter matrices, and
\begin{equation}\label{eq:zt0}
    \text{ $Z\to 0$ \quad as\quad $H(V)\to H(V_*)$}.
\end{equation}
The parameterization from \eqref{eq:paras:VtV} can be equivalently put as
\begin{align*}
    \widetilde V  &= (V_* + V_{*\bot} Z)\,  (I_k+Z^{\HH}Z)^{-1/2}Q,\\
    \widetilde V_{\bot}  &= (-V_*Z^{\HH} + V_{*\bot})\,      (I_{n-k}+ZZ^{\HH})^{-1/2} P.
 \end{align*}
By the first equation,
$Z$ is identical to the tangent-angle matrix from $\widetilde V$ to $V_*$:
\begin{equation}\label{eq:z}
    T(\widetilde V) =
    ( V_{*\bot}^{\HH} \widetilde V) (V_{*}^{\HH} \widetilde V)^{-1} =Z,
\end{equation}
where we have used $V_{*}^{\HH} \widetilde V = (I_k+Z^{\HH}Z)^{-1/2}Q$ and $V_{*\bot}^{\HH}
\widetilde V = Z(I_k+Z^{\HH}Z)^{-1/2}Q$.

Next, we establish an equation to characterize $Z$.
From $\widetilde V_{\bot}^{\HH} H(V) \widetilde V= \widetilde V_{\bot}^{\HH}
\widetilde V\widetilde \Lambda = 0$,
we get
\begin{align*}
    0&=\begin{bmatrix} -Z & I_{n-k} \end{bmatrix}[V_*, V_{*\bot}]^{\HH}
    H(V) [V_*, V_{*\bot}]\begin{bmatrix}I_k \\ Z\end{bmatrix} \\
    &=\begin{bmatrix} -Z & I_{n-k} \end{bmatrix}[V_*, V_{*\bot}]^{\HH}
    [H(V_*) + (H(V) - H(V_*))] [V_*, V_{*\bot}]\begin{bmatrix}I_k \\
    Z\end{bmatrix}\\
    &=\Lambda_{*\bot}Z-Z\Lambda_*+(-ZV_*^{\HH}+V_{*\bot}^{\HH})[H(V) - H(V_*)](V_*+V_{*\bot}Z).
\end{align*}
Therefore, $Z$ satisfies the Sylvester equation (view the right hand side as fixed)
\[ \Lambda_{*\bot}Z-Z\Lambda_* =
    (ZV_*^{\HH}-V_{*\bot}^{\HH})[H(V) - H(V_*)](V_*+V_{*\bot}Z).
\]
By \Cref{ass:gap}, we can solve the Sylvester equation to obtain
\begin{align}\label{eq:zxphi}
    Z= \SL(V) + \Phi(Z),
\end{align}
where
\[
    \Phi(Z)=D(V_*)\odot\left(ZV_*^{\HH}[H(V) - H(V_*)](V_*+V_{*\bot}Z)  -
    V_{*\bot}^{\HH}[H(V) - H(V_*)]V_{*\bot}Z\right),
\]
and $D(V_*)$ is defined as in~\eqref{eq:dvs}.
A quick calculation  shows that
\begin{equation}\label{eq:phiz}
    \|\Phi(Z)\|_{\F}\le \delta_*^{-1} \,   \|H(V) -    H(V_*)\|_{\F}(2\|Z\|_2+\|Z\|_2^2)
    =\lo( \|Z\|_{\UI}),
\end{equation}
where the last equation is due to $H(V)\to H(V_*)$ and $Z\to 0$, as $T(V)\to 0$,
and the equivalency of matrix norms.
Recall $T(\widetilde V) = Z$.  \Cref{eq:zxphi,eq:phiz} lead directly to~\eqref{eq:onestep}.


For item~\ref{item:ls}, we derive from the definition of $\SL(V)$ and
the expansion~\eqref{eq:diff} that
\begin{align*}
\SL(V) 
     & = D(V_*) \odot \left(V_{*\bot}^{\HH}\, {\bf D}H(V_*)[V_{*\bot}T(V)]\,  V_*\right)  +
     \lo(\|T(V)\|_{\UI}).
\end{align*}
Plugging it into~\eqref{eq:onestep}, and
exploiting $\|\OP(T(V))\|_{\UI} = \calo(\|T(V)\|_{\UI})$
since $\OP$ is an $\bbR$-linear operator of finite dimension
(which is bounded),  we complete the proof.
\end{proof}

We should mention that the tangent-angle matrix in the form of~\eqref{eq:tv}
appeared in the so-called
McWeeny transformation~\cite{Mcweeny:1960,Stanton:1981b,Stanton:1981}
in the density matrix theory for electronic structure calculations, where
the matrix was treated as an independent parameter that is not connected
with canonical angles of subspaces. This lack of geometric interpretation
makes it difficult to proceed a comprehensive convergence analysis
as developed in the following sections, and extend to 
the treatment of a continuous $H(V)$.

\section{Convergence analysis}\label{sec:conv}
Because of the invariance property \eqref{eq:univar},
the plain SCF iteration~\eqref{eq:pscf} should be inherently understood as a
subspace iterative scheme and the convergence of the basis matrices
$\{V_i\}_{i=0}^\infty$ to a solution $V_*$ should be measured by a metric on the
Grassmann manifold $\mathbf{Gr}(k,\bbC^n)$.  Let $d(\cdot,\cdot)$ be a metric
on $\mathbf{Gr}(k,\bbC^n)$. Without causing any ambiguity, in what follows we will not
distinguish an element $\cR(V)\in\mathbf{Gr}(k,\bbC^n)$ from  its representation $V\in\bbU^{n\times k}$.
The following notions are straightforward extensions of the existing ones:
\begin{enumerate}[(i)]
\item
SCF~\eqref{eq:pscf} is {\em locally convergent} to $V_*$, if
$d(V_i, V_*)\to 0$ as $i\to \infty$ for any initial $V_0$ that is sufficiently close to $V_*$ in
the metric, i.e., $d(V_0,V_*)$ is sufficiently small.

\item SCF~\eqref{eq:pscf} is {\em locally divergent} from
$V_*$, if for all $\varepsilon> 0$
there exists $V_0$ with $d(V_0,V_*)\leq \varepsilon$ such that
$d(V_i,V_*)$ doesn't converge to $0$, i.e., 
either $d(V_i,V_*)$ doesn't converge at all or converges to something not $0$.
\end{enumerate}

\subsection{Contraction factors}\label{sec:local}

There are two fundamental quantities
that provide convergence measures of SCF on $\mathbf{Gr}(k,\bbC^n)$:
{\em local contraction factor\/} and
{\em local asymptotic average contraction factor}.
The former, which is a quantity to assess local convergence,
accounts for the worst case error reduction of SCF per iterative
step.  The latter captures the asymptotic average convergence rate of SCF,
and provides a sufficient and almost necessary condition for
the local convergence.



Since SCF is a fixed-point iteration on the
Grassmann manifold $\mathbf{Gr}(k,\bbC^n)$,
the {\em local contraction factor of SCF} is defined as
\begin{equation}\label{eq:bestlip}
    \eta_{\sup}
    := \limsup_{V_0\in \bbU^{n\times k} \atop d(V_0,V_*)\to 0}
    \frac{ d(V_1,V_*)}{d(V_0,V_*)}.
\end{equation}
Such a constant can be viewed as the (best) local Lipschitz constant for
the fixed-point mapping of SCF.
We observe that the condition $\eta_{\sup}<1$, which implies SCF is locally error
reductive, is sufficient for local convergence.
In the convergent case, it follows from the definition~\eqref{eq:bestlip} that
\[
\limsup_{k\to \infty} \frac{d(V_{k+1},V_*)}{d(V_{k},V_*)} \leq \eta_{\sup},
\]
namely, the {\em (asymptotic) convergence rate} of SCF is bounded by
$\eta_{\sup}$.

To take into the account of oscillation
and to obtain tighter convergence bounds,
the one-step contraction factor \eqref{eq:bestlip}
can be generalized to multiple iterative steps.
Let $m$ be a given positive integer, and define
\begin{equation}\label{eq:mstepconv}
	\eta_{\sup,m} :=
    \limsup_{V_0\in \bbU^{n\times k} \atop  d(V_0,V_*)\to 0}
    \left(\frac{d(V_{m},V_*)}{d(V_0,V_*)}\right)^{1/m}.
\end{equation}
Then $\eta_{\sup,m}$ is an average contraction factor
per $m$ consecutive iterative steps of SCF~\eqref{eq:pscf}.
The limit of the average contraction factor,
as $m\to \infty$,
\begin{equation}\label{eq:assfac}
    \eta_{\sup,\infty} := \limsup_{m\to\infty} \ \eta_{\sup,m}
\end{equation}
defines a \emph{local asymptotic average contraction factor} of SCF.
By definition, the number $\eta_{\sup,\infty}$ measures
the average convergence rate of SCF.
The average convergence rate is a conventional tool to study matrix iterative
methods~\cite{Varga:1999} and typically leads to tight convergence rates
in practice.
It follows from item~\ref{i:lem:2sup:b} of the lemma below
that $\eta_{\sup,\infty}$ 
is the optimal local convergence rate and
thereby the optimal contraction factor of SCF.
We caution the reader that $\eta_{\sup}$, $\eta_{\sup,m}$ 
and $\eta_{\sup,\infty}$ 
depend on the metric $d(\cdot,\cdot)$ and the dependency is
suppressed for notational clarity. 

\begin{lemma}\label{lem:2sup}
Suppose~\Cref{ass:gap} and $\eta_{\sup}<\infty$.

\begin{enumerate}[{\rm (a)}]

\item\label{i:lem:2sup:a}
It holds that for any $m>1$
\begin{equation}\label{eq:eee}
\eta_{\sup,\infty}\leq  \eta_{\sup,m} \leq \eta_{\sup}.
\end{equation}

\item\label{i:lem:2sup:b}
If $\eta_{\sup,\infty} < 1$, then SCF is locally convergent to $V_*$, with its asymptotic average convergence rate bounded by $\eta_{\sup,\infty}$. If $\eta_{\sup,\infty} > 1$,  then SCF is locally divergent from $V_*$.


\end{enumerate}
\end{lemma}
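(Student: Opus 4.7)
The plan is to prove part (a) by iterating the one-step and $m$-step contraction estimates implied by the definitions of $\eta_{\sup}$ and $\eta_{\sup,m}$, and to prove part (b) by exploiting part (a) together with a compactness argument on the Grassmannian $\mathbf{Gr}(k,\bbC^n)$.

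For part (a), I first establish $\eta_{\sup,m}\leq\eta_{\sup}$. By the definition of $\eta_{\sup}$ as a $\limsup$, for every $\varepsilon>0$ there is a neighborhood of $V_*$ on which $d(V_{i+1},V_*)\leq(\eta_{\sup}+\varepsilon)\,d(V_i,V_*)$. Taking $V_0$ close enough that the first $m$ iterates stay in this neighborhood, induction gives $d(V_m,V_*)\leq(\eta_{\sup}+\varepsilon)^m d(V_0,V_*)$, hence $\eta_{\sup,m}\leq \eta_{\sup}+\varepsilon$; sending $\varepsilon\to 0$ yields the bound. For $\eta_{\sup,\infty}\leq\eta_{\sup,m}$, I use a Fekete-type submultiplicative argument: writing $N=qm+r$ with $0\leq r<m$, applying the $m$-step contraction $q$ times followed by the one-step bound $r$ times gives
\begin{equation*}
 d(V_N,V_*)\leq (\eta_{\sup}+\varepsilon)^r(\eta_{\sup,m}+\varepsilon)^{qm}\,d(V_0,V_*),
\end{equation*}
valid for $V_0$ in a (possibly $N$-dependent, but nonempty) small ball so that all iterates $V_0,V_m,\dots,V_{qm}$ stay in the required neighborhood. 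Taking $N$-th roots and using $qm/N\to 1$ and $r/N\to 0$ as $N\to\infty$ yields $\eta_{\sup,N}\leq\eta_{\sup,m}+\varepsilon$ for all large $N$; letting $\varepsilon\to 0$ completes the argument.

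For part (b), the convergence claim follows quickly from part (a): since $\eta_{\sup,\infty}<1$, pick $m_0$ with $\eta_{\sup,m_0}<1$ and some $\rho\in(\eta_{\sup,m_0},1)$; for $V_0$ close enough to $V_*$, $d(V_{m_0},V_*)\leq \rho^{m_0}\,d(V_0,V_*)$, so iterating in blocks of $m_0$ gives $d(V_{km_0},V_*)\leq\rho^{km_0} d(V_0,V_*)\to 0$, and the bounded one-step factor $\eta_{\sup}<\infty$ controls the intermediate iterates. The asymptotic rate bound $\leq\eta_{\sup,\infty}$ is then immediate from~\eqref{eq:assfac}. The divergence claim under $\eta_{\sup,\infty}>1$ is where I expect the main work. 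I would argue by contradiction: if SCF were locally convergent, then there is $\delta>0$ such that $d(V_0,V_*)\leq\delta$ implies $d(V_i,V_*)\to 0$. Using compactness of $\mathbf{Gr}(k,\bbC^n)$ and continuity of the SCF iteration map together with a Dini-type argument applied to $h_N(V_0):=\sup_{i\geq N}d(V_i,V_*)$, I would upgrade this pointwise attraction to uniform convergence $\sup_{V_0\in\bar B_\delta} d(V_i,V_*)\to 0$. The final step is to combine this uniform decay with the Lipschitz-type ceiling from $\eta_{\sup}<\infty$ to force $\eta_{\sup,\infty}\leq 1$, contradicting the hypothesis. The delicate point, which I expect to be the hardest, is precisely this last combination: for sequences $\{V_0^{(m)}\}$ realizing the $\limsup$ in the definition of $\eta_{\sup,m}$, the distance $d(V_0^{(m)},V_*)$ can shrink very rapidly, so care is needed to extract a quantitative contradiction from the interplay between the uniform rate on the numerator $d(V_m,V_*)$ and the shrinking denominator $d(V_0^{(m)},V_*)$, rather than a merely asymptotic one.
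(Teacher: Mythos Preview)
Your treatment of part~(a) and the convergence half of part~(b) matches the paper's: both iterate the one-step bound to get $\eta_{\sup,m}\leq\eta_{\sup}$, use a division-with-remainder decomposition $N=qm+r$ to get $\eta_{\sup,\infty}\leq\eta_{\sup,m}$, and establish convergence by fixing one $m_0$ with $\eta_{\sup,m_0}<1$, iterating in blocks of $m_0$, and padding the remaining steps with the bounded one-step factor $\eta_{\sup}<\infty$.

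For the divergence half of part~(b) your route departs from the paper's and has two genuine gaps. The paper does not use compactness or Dini at all; it argues directly from the definitions: $\eta_{\sup,\infty}>1$ gives a subsequence $\{m_i\}$ with $\eta_{\sup,m_i}\geq c>1$, and the $\limsup$ defining $\eta_{\sup,m_i}$ then supplies, for every $\varepsilon>0$, a $V_0$ with $d(V_0,V_*)\leq\varepsilon$ and $d(V_{m_i},V_*)/d(V_0,V_*)\geq(c-\delta)^{m_i}$, from which local divergence is read off. In your approach, first, Dini's theorem does not apply to $h_N(V_0)=\sup_{i\geq N}d(V_i,V_*)$: as a pointwise supremum of continuous functions it is only lower semicontinuous, whereas Dini for a decreasing sequence requires upper semicontinuity (or continuity) of each term. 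Second---and this is exactly the point you flag as ``delicate'' but do not resolve---even if you had uniform attraction $\sup_{V_0\in\bar B_\delta}d(V_m,V_*)\to 0$, this bounds only the numerator in $d(V_m,V_*)/d(V_0,V_*)$ and says nothing about the ratio as $d(V_0,V_*)\to 0$, since the denominator can shrink arbitrarily fast along the sequences realizing the $\limsup$. So a successful Dini step would still not force $\eta_{\sup,\infty}\leq 1$, and there is no evident way to close the argument along these lines; the paper's direct approach avoids this difficulty entirely.
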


\begin{proof}
For item~\ref{i:lem:2sup:a},
first from definition~\eqref{eq:bestlip} and $\eta_{\sup}<\infty$,
we conclude that ${d(V_p,V_*)}\to 0$ for $p=0,1,\dots, m-1$ as $d(V_0,V_*)\to 0$.
Therefore,
\begin{align*}
    \limsup_{V_0\in \bbU^{n\times k} \atop  d(V_0,V_*)\to 0}
    \left(\frac{d(V_{m},V_*)}{d(V_0,V_*)}\right)^{1/m} & =
    \limsup_{V_0\in \bbU^{n\times k} \atop  d(V_0,V_*)\to 0}
    \left(\prod_{p=0}^{m-1}\frac{d(V_{p+1},V_*)}{d(V_p,V_*)}\right)^{1/m} \\ 
    & \leq
    \left(\prod_{p=0}^{m-1}
    \limsup_{V_p\in \bbU^{n\times k} \atop  d(V_p,V_*)\to 0}
    \frac{d(V_{p+1},V_*)}{d(V_p,V_*)}
    \right)^{1/m},
\end{align*} 
    and $\eta_{\sup,m}\leq \eta_{\sup}$ follows.

    Now fix $m$. Any integer $m'>m$ can be expressed as $m' = sm + p$,
    for some $s\ge 0$ and $0\leq p\leq m-1$.
    Using the same arguments as from above, and noticing that
    \begin{align*}
    \left(\frac{d(V_{m'},V_*)}{d(V_0,V_*)}\right)^{1/m'}
    & =
    \left( \frac{d(V_{m'},V_*)}{d(V_p,V_*)} \frac{d(V_{p},V_*)}{d(V_0,V_*)} \right)^{1/m'} \\ 
    & =
    \left( \prod_{\ell = 0}^{s-1} \frac{d(V_{m(\ell+1)+p},V_*)}{d(V_{m\ell
    +p},V_*)} \cdot \frac{d(V_{p},V_*)}{d(V_0,V_*)} \right)^{1/m'},
    \end{align*}
    we obtain by taking $\limsup$ that
    \[
        \eta_{\sup,m'} \leq
        (\eta_{\sup,m})^{sm/m'}\cdot (\eta_{\sup,p})^{p/m'}
        =
        \eta_{\sup,m}\cdot
        \left(\eta_{\sup,p}\,/\,\eta_{\sup,m}\right)^{p/m'}.
    \]
    We can always assume $\eta_{\sup,m}\neq 0$, otherwise SCF converges
    in $m$ iterations and $\eta_{\sup,\infty}=0$.
    Letting $m'\to \infty$ and noticing that $\eta_{\sup,p}\leq \eta_{\sup}$ is bounded, we get
    $\eta_{\sup,\infty}=\limsup_{m'\to\infty}\eta_{\sup,m'}\leq \eta_{\sup,m}$.

For item~\ref{i:lem:2sup:b},
consider first $\eta_{\sup,\infty} < 1$. Pick  
a constant $c$ such that $\eta_{\sup,\infty} <c< 1$.
Because of how $\eta_{\sup,m}$ is defined in \eqref{eq:assfac}, we see that
$\eta_{\sup,m} \leq c$ for $m$ sufficiently large  and for 
all $V_0$ sufficiently close to $V_*$ in the metric $d(\cdot,\cdot)$.
Equivalently,
there exist $\delta_1 >0$ and $m_0>0$ such that
\begin{equation}\label{eq:2sup:pf-1}
d(V_{m},V_*) \leq c^{m} \, d(V_0,V_*)
\end{equation}
for all $V_0$ with $d(V_0,V_*) < \delta_1$ and for all $m\ge m_0$.
Recall that $\eta_{\sup}<\infty$ and pick a finite constant 
$c_2>\max\{1,\eta_{\sup}\}\ge 1$.
By \eqref{eq:bestlip}, 
there exists $\delta_2\in(0,\delta_1)$ such that
\begin{equation}\label{eq:2sup:pf-2}
d(V_1,V_*) \leq c_2 \, d(V_0,V_*)
\end{equation}
for all $V_0$ with $d(V_0,V_*) < \delta_2$.
Let $\delta_3= c_2^{-(m_0-1)}\times\delta_2<\delta_2<\delta_1$. 
For any $V_0$ with $d(V_0,V_*) < \delta_3$, we have by \eqref{eq:2sup:pf-2}
\begin{subequations}\label{eq:2sup:pf-3}
\begin{align}
d(V_1,V_*)&\leq c_2 \, d(V_0,V_*)<c_2\delta_3\le\delta_2, \label{eq:2sup:pf-3a}\\
d(V_2,V_*)&\le c_2 d(V_1,V_*)\le c_2^2d(V_0,V_*)<c_2^2\delta_3\le\delta_2, \label{eq:2sup:pf-3b}\\
   &\vdots \nonumber\\
d(V_{m_0-1},V_*)&\le c_2^{m_0-1}d(V_0,V_*)<c_2^{m_0-1}\delta_3\le\delta_2. \label{eq:2sup:pf-3c}
\end{align}
\end{subequations}
For any $m>m_0$, we can write $m=sm_0+p$ for some $0\le p\le m_0-1$.
We have by \eqref{eq:2sup:pf-1} and \eqref{eq:2sup:pf-3} that for $m>m_0$ and for any $V_0$ with $d(V_0,V_*) < \delta_3$
    \[
        d(V_{m},V_*)
        \leq c^{sm_0} \cdot d(V_p,V_*)
        =c^{m} \cdot \frac{ d(V_p,V_*)}{c^{p} }
        \leq c^{m} \cdot \frac{ \delta_2}{c^{m_0-1}}.
    \]
    Letting $m\to \infty$ yields $d(V_{m},V_*) \to 0$, as expected.

    On the other hand, if $\eta_{\sup,\infty} > 1$,
    then there exist $c > 1$ and a subsequence $\{m_i\}_{i=0}^\infty$ of positive integers such that
    $\eta_{\sup,m_i} \geq c$  as $i\to\infty$.
    Let $\delta>0$ be a constant satisfying $c-\delta >1$.
    It follows from the definition of $\eta_{\sup,m}$ that
    for all $\varepsilon> 0$ there exists $V_0$, with
    $d(V_0,V_*) \leq \varepsilon$, s.t.,
    $d(V_{m_i},V_*)/ d(V_0,V_*) \geq (c-\delta )^{m_i}$,
    which is arbitrarily large as $m_i\to\infty$.
    Hence the iteration is locally divergent.
\end{proof}

\subsection{Characterization of contraction factors}\label{sec:sprd}

The definitions of $\eta_{\sup}$ in~\eqref{eq:bestlip}
and $\eta_{\sup,\infty}$ in~\eqref{eq:assfac} are generic.
A meaningful characterization 
of $\eta_{\sup}$ and $\eta_{\sup,\infty}$ will have to involve
the specific choice of the metric $d(\cdot,\cdot)$ and
the detail of $H(V)$.
\Cref{thm:lnorm} below contains the main contributions of this paper.
It reveals for a class of metrics a direct characterization of $\eta_{\sup}$ by $H(V)$,
as compared to the previous works~\cite{Cai:2018,Liu:2014,Yang:2009}
on the upper bounds of $\eta_{\sup}$.
Furthermore, for differentiable $H(V)$,
it provides closed-form expressions for
$\eta_{\sup}$ and the optimal contraction factor
$\eta_{\sup,\infty}$.

\begin{theorem}\label{thm:lnorm}
Suppose~\Cref{ass:gap} and let $d(\cdot,\cdot):=\|\Theta(\cdot,\cdot)\|_{\UI}$.
\begin{enumerate}[{\rm (a)}]
\item \label{i:thm:lnorm:a1}
If $H(V)$ is Lipschitz continuous at $V_*$,
then
\begin{equation} \label{eq:etasup2}
\eta_{\sup} =
\limsup_{V\in \bbU^{n\times k} \atop \|\tan\Theta(V,V_*)\|_{\UI}\rightarrow 0}
\frac {\| \SL(V)\|_{\UI}} { \|\tan\Theta(V,V_*)\|_{\UI}}
\quad <\ \infty,
\end{equation}
where $\SL(V)$ is the unique solution of the Sylvester equation
defined in~\eqref{eq:xv}.

\item \label{i:thm:lnorm:a2}
If $H(V)$ is differentiable at $V_*$.  Then
\begin{equation}\label{eq:etainfty}
\eta_{\sup} = \vvvert {\OP}\vvvert_{\UI}\ge \eta_{\sup,\infty}=\rho({\OP}),
\end{equation}
where 
$\OP$ is the {\LOP} of the plain SCF defined in~\eqref{eq:ls},  
$\vvvert\OP \vvvert_{\UI}$ is the operator norm of $\OP$
    induced by the unitarily invariant norm $\|\cdot\|_{\UI}$, i.e.,
    $\vvvert\OP\vvvert_{\UI} :=   \sup_{Z\neq 0}
    \frac{\|\OP(Z)\|_{\UI}}{\|Z\|_{\UI}}.$

Consequently, the plain SCF \eqref{eq:pscf} is locally convergent to
$V_*$ with its asymptotic average convergence rate bounded by $\rho(\OP)$ if $\rho(\OP) < 1$, and
 locally divergent at $V_*$ if $\rho(\OP) > 1$.
\end{enumerate}
\end{theorem}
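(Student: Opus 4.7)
The plan is to transfer everything from the canonical-angle metric $d(\cdot,\cdot)=\|\Theta(\cdot,\cdot)\|_{\UI}$ to the tangent-angle matrix $T(V)$, so that Lemma~\ref{lem:onestep} reduces the contraction factors to standard linear-operator quantities. By~\eqref{eq:ts} and~\eqref{eq:tmat}, as $V\to V_*$ we have $\|\Theta(V,V_*)\|_{\UI}=\|T(V)\|_{\UI}(1+o(1))$, so every $\limsup$ defining $\eta_{\sup}$, $\eta_{\sup,m}$, and $\eta_{\sup,\infty}$ can be computed with $\|T(V_i)\|_{\UI}$ in place of $d(V_i,V_*)$.

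For part~(\ref{i:thm:lnorm:a1}), I would apply Lemma~\ref{lem:onestep}(\ref{item:tv}) to write $T(V_1)=\SL(V_0)+\lo(\|\SL(V_0)\|_{\UI})$, divide by $\|T(V_0)\|_{\UI}$, and pass to the $\limsup$ to obtain the identity~\eqref{eq:etasup2}. Finiteness then amounts to a Lipschitz estimate: Lemma~\ref{lem:hv} together with the right-unitary invariance~\eqref{eq:univar} shows that $H(V)$ can be evaluated at a point within $\calo(\|T(V)\|_{\UI})$ of $V_*$, so Lipschitz continuity at $V_*$ gives $\|H(V)-H(V_*)\|_{\UI}=\calo(\|T(V)\|_{\UI})$, and the closed form~\eqref{eq:xv} then forces $\|\SL(V)\|_{\UI}=\calo(\|T(V)\|_{\UI})$.

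For part~(\ref{i:thm:lnorm:a2}), Lemma~\ref{lem:onestep}(\ref{item:ls}) yields $T(V_1)=\OP(T(V_0))+\lo(\|T(V_0)\|_{\UI})$, so the same manipulation delivers $\eta_{\sup}\le\vvvert\OP\vvvert_{\UI}$. Equality is achieved because any sufficiently small $Z\in\bbC^{(n-k)\times k}$ is realized as $T(V)$ for $V=(V_*+V_{*\bot}Z)(I+Z^{\HH}Z)^{-1/2}$, so sequences $V_0\to V_*$ can be chosen with $T(V_0)$ along a norm-maximizing direction of $\OP$. For the asymptotic factor, I would iterate the one-step recursion by induction on $m$; the boundedness of the $\bbR$-linear map $\OP$ on the finite-dimensional space $\bbC^{(n-k)\times k}(\bbR)$ ensures the $\lo$ remainders propagate as
\begin{equation*}
T(V_m)=\OP^m(T(V_0))+\lo(\|T(V_0)\|_{\UI})
\end{equation*}
for every fixed $m$, so $\eta_{\sup,m}=\vvvert\OP^m\vvvert_{\UI}^{1/m}$. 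Gelfand's formula applied to the matrix representation $\mathbf L$ in~\eqref{eq:sprad} then gives $\eta_{\sup,\infty}=\lim_m\vvvert\OP^m\vvvert_{\UI}^{1/m}=\rho(\OP)$. The convergence/divergence dichotomy is immediate from Lemma~\ref{lem:2sup}(\ref{i:lem:2sup:b}).

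The main technical care lies in the inductive propagation of the $\lo$ remainders: at the inductive step one needs both $\OP(\lo(\|T(V_0)\|_{\UI}))=\lo(\|T(V_0)\|_{\UI})$, which uses the boundedness of $\OP$, and $\lo(\|T(V_\ell)\|_{\UI})=\lo(\|T(V_0)\|_{\UI})$, which follows from $\|T(V_\ell)\|_{\UI}=\calo(\|T(V_0)\|_{\UI})$ obtained at earlier iterations. A secondary routine point is matching the spectral radius in~\eqref{eq:sprad} with the Gelfand limit of $\vvvert\cdot\vvvert_{\UI}$-operator norms, which is standard because all norms on the finite-dimensional real vector space $\bbC^{(n-k)\times k}(\bbR)$ are equivalent.
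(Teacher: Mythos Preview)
Your proposal is correct and follows essentially the same route as the paper: convert the metric $\|\Theta(\cdot,\cdot)\|_{\UI}$ to $\|T(\cdot)\|_{\UI}$ via~\eqref{eq:ts}, invoke Lemma~\ref{lem:onestep} to obtain the one-step and $m$-step recursions in $T(V_i)$, read off $\eta_{\sup}$ and $\eta_{\sup,m}=\vvvert\OP^m\vvvert_{\UI}^{1/m}$, and finish with Gelfand's formula and Lemma~\ref{lem:2sup}. Your explicit remark that every small $Z$ is realized as $T(V)$ for $V=(V_*+V_{*\bot}Z)(I+Z^{\HH}Z)^{-1/2}$, and your careful bookkeeping of the $\lo$ remainders under iteration, are details the paper leaves implicit but are exactly what is needed.
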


\begin{proof}
For item~\ref{i:thm:lnorm:a1},
by definition~\eqref{eq:bestlip} with $d(\cdot,\cdot):=\|\Theta(\cdot,\cdot)\|_{\UI}$,
we obtain
    \begin{equation}\label{eq:etaui}
    \eta_{\sup} =
    \limsup_{V_0\in \bbU^{n\times k} \atop  \|\Theta(V_0,V_*)\|_{\UI}\to 0}
    \frac{ \|\Theta(V_1,V_*)\|_{\UI}}{\|\Theta(V_0,V_*)\|_{\UI}} =
    \limsup_{V_0\in \bbU^{n\times k} \atop  \|\tan\Theta(V_0,V_*)\|_{\UI}\to 0}
    \frac{ \|\tan\Theta(V_1,V_*)\|_{\UI}}{\|\tan\Theta(V_0,V_*)\|_{\UI}},
    \end{equation}
    where the second equality is a consequence of~\eqref{eq:ts},
    together with $\Theta(V_1,V_*)\to 0$ as $\Theta(V_0,V_*)\to 0$ due to~\eqref{eq:onestep}.
    Then, a direct application of~\eqref{eq:onestep} leads
    to~\eqref{eq:etasup2}. 

For the boundedness of $\eta_{\sup} < \infty$,
by taking norms on the Sylvester solution~\eqref{eq:xv}
and exploiting the $2$-norm consistency property $\|AB\|_{\UI}\leq
\|A\|_2\|B\|_{\UI}$ of unitarily invariant norms, we have
\begin{equation}\label{eq:xvv}
   \|\SL(V)\|_{\UI}\leq
\delta_*^{-1}\|V_{*\bot}^{\HH}[H(V_*)-H(V)]V_*\|_{\UI}.
\end{equation}
On the other hand,
it follows from the Lipschitz continuity of $H(V)$ and~\eqref{eq:uninvar} that
\[
    \|H(V)-H(V_*)\|_{2}
    \leq \alpha \, \left(\| \tan\Theta(V,V_*) \|_{2} + \calo(\|
    \tan\Theta(V,V_*) \|_{2}^2)\right),
\]
for some constant $\alpha<\infty $.
Combining this with~\eqref{eq:xvv}
and~\eqref{eq:etasup2}, we conclude $\eta_{\sup} < \infty $.

For item~\ref{i:thm:lnorm:a2},
the inequality in \eqref{eq:etainfty} has already been established
in~\eqref{eq:eee},
and the formula of $\eta_{\sup}$ follows directly from~\eqref{eq:etasup2}
and the expansion~\eqref{eq:onestepdiff}.
It remains to find the expressions for $\eta_{\sup,\infty}$.


    Denote by $T_m=(V_{*\bot}^{\HH}V_m)(V_*^{\HH} V_m)^{-1}$ for
    $m=0,1,\dots$.
    It follows from~\Cref{lem:onestep} that
    \[
        T_m = \OP^m(T_0) + \lo(c_m\|T_0\|_{\UI}),
    \]
    where $\OP^m =\OP\circ\dots\circ\OP$
    represents the composition of the linear operator $\OP$ for $m$ times,
    and $c_m$ is a constant independent of $T_0$.
    Hence for any given $m$
\begin{align*} 
        \eta_{\sup,m} & =
        \limsup_{\|\Theta(V_0,V_*)\|_{\UI}\to 0}
        \left(\frac{\|\Theta(V_m,V_*)\|_{\UI}}{\|\Theta(V_0,V_*)\|_{\UI}}\right)^{1/m}
        =
        \limsup_{\|T_0\|_{\UI}\to 0}
        \left(\frac{\|T_m\|_{\UI}}{\|T_0\|_{\UI}}\right)^{1/m} \\ 
        & =
        \limsup_{T_0\to 0}
        \left(\frac{\|{\OP^m(T_0)}\|_{\UI}}{\|T_0\|_{\UI}}\right)^{1/m},
\end{align*} 
    where the second equation is due to~\eqref{eq:ts}, together with the continuity $T_m\to 0$ as $T_0\to 0$,
    implied by~\eqref{eq:onestep}.
    Since $\OP$ is a finite dimensional linear operator,
    we have that $\eta_{\sup,m} = (\vvvert \OP^m \vvvert_{\UI})^{1/m}$.
The expression for $\eta_{\sup,\infty}$ in~\eqref{eq:etainfty} 
is a consequence of
    Gelfand's formula, which says
    $\lim_{m\to \infty} \vvvert{\OP}^m\vvvert^{1/m}= \rho({\OP})$ for
    any operator norm $\vvvert\cdot\vvvert$ 
in a finite dimensional vector space (see, e.g.,~\cite[Thm 17.4]{Lax:2002}).
\end{proof}


In recent years,
a series of works, e.g.,~\cite{Cai:2018,Liu:2014,Yang:2009},
have been published to improve the upper bounds of the local contraction factor
$\eta_{\sup}$.
Those bounds were typically established for particular choices
of the metric $d(\cdot,\cdot)$ between subspaces and for a class of $H(V)$.
Let us revisit particularly the following convergence factor
of the plain SCF iteration presented recently in~\cite{Cai:2018}: 
\begin{align}\label{eq:czbl}
    \eta_{\czbl}:=
     \limsup_{V\in \bbU^{n\times k} \atop
    \|\sin\Theta(V,V_*)\|_{\UI}\rightarrow 0}
    \frac {\delta_*^{-1}\|V_{*\bot}^{\HH}[H(V_*)-H(V)]V_*\|_{\UI}} {\|
    \sin\Theta(V,V_*)\|_{\UI}}.
\end{align}
For a differentiable $H(V)$ with the expansion~\eqref{eq:diff},
$\eta_{\czbl}$ can be simplified as
\begin{equation}\label{eq:etac}
\eta_{\czbl} = \delta_*^{-1} \cdot \vvvert\OP_{\czbl} \vvvert_{\UI},
\end{equation}
where $\OP_{\czbl}:\bbC^{(n-k)\times k} \to \bbC^{(n-k)\times k}$
is an $\bbR$-linear operator:
\begin{equation}\label{eq:OP-czbl}
\OP_{\czbl}(Z) = V_{*\bot}^{\HH}\,  \mathbf D H(V_*)[V_{*\bot} Z]\,  V_*.
\end{equation}
The convergence factor $\eta_{\czbl}$ in~\eqref{eq:czbl} has significantly
improved several previously established results
in \cite{Liu:2014,Yang:2009}.
However, it follows from the characterization of $\eta_{\sup}$ in~\eqref{eq:etasup2}
and the bound of $\SL(V)$ in~\eqref{eq:xvv} that
\begin{align}\label{eq:ub-czbl}
\eta_{\sup}  \leq \eta_{\czbl}.
\end{align}
Therefore, the quantity $\eta_{\czbl}$ is an upper bound of
$\eta_{\sup}$, and could
substantially underestimate the convergence rate of SCF in practice, 
see numerical examples in~\Cref{sec:examples}.

We have already seen from~\Cref{lem:2sup} that $\eta_{\sup,\infty}$ is
an optimal convergence factor for SCF and $\eta_{\sup,\infty}\leq \eta_{\sup}$.
To see how large  the gap between
$\eta_{\sup,\infty}$ and $\eta_{\sup}$ in~\eqref{eq:etainfty} might get, we consider in particular the
local contraction factor $\eta_{\sup}$ in the commonly used Frobenius norm:
\begin{align}
\vvvert\OP\vvvert_{\F}
  &:=\sup_{Z\neq 0} \frac{\|\OP(Z)\|_{\F}}{\|Z\|_{\F}}
    = \sup_{Z\neq 0 } \frac{\langle \OP(Z),\OP(Z)\rangle^{1/2} }{\langle Z,Z\rangle^{1/2} } \nonumber\\
  &= \sup_{Z\neq 0 } \frac{\langle Z,\OP^*\circ\OP(Z)\rangle^{1/2} }{\langle Z,Z\rangle^{1/2} }
   = \left(\lambda_{\max} (\OP^*\circ\OP)\right)^{1/2},\label{eq:lam}
\end{align}
where $\langle X,Y\rangle = \Re (\tr(X^{\HH}Y))$ denotes the inner product
for $\bbC^{(n-k)\times k}(\bbR)$,
and $\OP^{*}$ is the adjoint of $\OP$.
It follows from~\eqref{eq:etainfty} that
\begin{equation}\label{eq:eta-sup-F}
    \eta_{\sup}= |\lambda_{\max} (\OP^*\circ\OP)|^{1/2}
\geq |\rho(\OP)| = \eta_{\sup,\infty}.
\end{equation}
By the standard matrix analysis, the equality in \eqref{eq:eta-sup-F} 
holds if $\OP$ is a {\em normal} linear operator on $\bbC^N(\bbR)$,
and the gap between the two numbers can be arbitrarily large when $\OP$ is far from normal.
For practical NEPv, such as the ones in Section~\ref{sec:examples}, 
we have observed that $\OP$ is usually
a slightly non-normal operator, causing a small gap between 
the two contraction factors.
Recall that $\eta_{\sup}$ is dependent of the metrics $d$.
Another possibility for the equality in~\eqref{eq:eee} to hold is through
a particular choice of metric.
Unfortunately, the optimal metric for $\eta_{\sup}$ is generally hard to know.

Finally, we comment on another recent work~\cite{Upadhyaya:2018}
on the local convergence analysis of SCF using the spectral radius.
In~\cite{Upadhyaya:2018},
SCF is viewed as a fixed-point iteration $P_{k+1} = \psi(P_k)$
in the density matrix $P_k = V_kV_k^{\HH}\in\bbC^{n\times n}$,
rather than in $V_k$ directly.
The authors showed that the fixed-point mapping $\psi(P)$ has a closed-form
Jacobian supermatrix $J$, assuming $H(V)$ is a linear function in
$P=VV^{\HH}$. So the spectral radius of $J$ also provides 
a convergence criterion.
Since $P$ has $p=(n+1)n/2$ free variables, the corresponding
supermatrix $J$ is
of size $p$-by-$p$.
This is in contrast to the $\bbR$-linear operator
$\OP$~\eqref{eq:ls} in tangent-angle matrices, which is only of 
size $q$-by-$q$ with $q=2(n-k)k = \calo ({p}^{1/2})$.
In addition to the reduced size, the use of linear operator, rather than
a supermatrix, allows for more convenient computation of the spectral radius in
practice, as will be discussed in~\Cref{sec:setup}.
Furthermore, $\OP$ is also easier to work with theoretically and numerically,
thanks to its simplicity in formulation and more explicit
dependencies on key variables, such as derivatives and eigenvalue gaps.
In the next section, we will show how to apply the spectral radius
$\rho(\OP)$ to analyze the so-called {\em level-shifting scheme\/}
for stabilizing and accelerating the plain SCF iteration.

\section{Level-shifted SCF iteration}\label{sec:ls}
In the previous section, we have discussed that
if the spectral radius $\rho(\OP) >1$ (or more generally $\eta_{\sup,\infty}>1$ in the case when $H(V)$ is simply just continuous),
then the plain SCF~\eqref{eq:pscf} is locally divergent at $V_*$.
However, even if $\rho(\OP) <1$, the process is prone
to slow convergence or oscillation before reaching local convergence.
To address those issues, the plain SCF may be applied in practice
with some stabilizing schemes to help with convergence.
Among the most popular choices is the level-shifting strategy
initially developed 
in computation chemistry~\cite{Saunders:1973,Thogersen:2004,Yang:2007}.
In this section, we discuss why such a scheme can work through the lens of
spectral radius when $H(V)$ is differentiable.

\subsection{Level-shifted SCF iteration}
The level-shifting scheme
modifies the plain SCF~\eqref{eq:pscf} with a parameter $\sigma$
as follows:
\begin{equation}\label{eq:levelshift}
    [H(V_i)-\sigma \, V_iV_i^{\HH}] V_{i+1} = V_{i+1}\Lambda_{i+1},
    \quad\text{for}\quad i = 0,1,2,\dots,
\end{equation}
where $V_{i+1}$ is an orthonormal basis matrix of the invariant subspace associated with the $k$ smallest eigenvalues of
the matrix $H(V_i)-\sigma\, V_iV_i^{\HH}$.
It can be viewed simply as the plain SCF~\eqref{eq:pscf}
applied to the level-shifted NEPv
\begin{equation}\label{eq:shifted}
H_{\sigma}(V) V = V \Lambda
\quad\text{with\quad $H_{\sigma}(V):=  H(V)-\sigma VV^{\HH}$}.
\end{equation}
Note that $H_{\sigma}(V)$ is again unitarily invariant as in~\eqref{eq:univar}.
The level-shifting transformation does not alter the solutions
of the original NEPv~\eqref{eq:nepv}, but shifts related eigenvalues of $H(V)$ by $\sigma$:
\[
H(V) V = V \Lambda
\qquad\Longleftrightarrow\qquad
H_{\sigma} (V) V = V (\Lambda-\sigma I_k).
\]
Hence if $(V_*,\Lambda_*)$ is a solution of 
the original NEPv~\eqref{eq:nepv},
then $(V_*,\Lambda_* -\sigma I_k)$ will solve 
the level-shifted NEPv
\begin{equation} \label{eq:nepvls} 
H_{\sigma}(V)V = V\Lambda.
\end{equation}
In the following discussion, we assume the parameter $\sigma$ is a constant
for convenience. In practice, it can change  iteration-by-iteration.

One direct consequence of the level-shifting transformation
is that it enlarges the eigenvalue gap at the solution
$V_*$.  By the eigen-decomposition~\eqref{eq:eigdec}, 
we obtain
\begin{equation}\label{eq:eighsigma}
     H_{\sigma} (V_*) \,  [V_*,V_{*\bot}]
     = [V_*,V_{*\bot}]\,
    \begin{bmatrix}
        \Lambda_*- \sigma I_k & \\
                              & \Lambda_{*\bot}
    \end{bmatrix}.
\end{equation}
Recall that  $\Lambda_*=\diag (\lambda_1,\dots,\lambda_k)$
and $\Lambda_{*\bot}=\diag (\lambda_{k+1},\dots,\lambda_n)$
consist of the ordered eigenvalues of $H(V_*)$ as in~\eqref{eq:eigdec}.
Therefore, the gap between the $k$th and $(k+1)$st eigenvalue
of $H_{\sigma}(V_*)$ becomes
\begin{equation}
    \delta_{\sigma*}: = \lambda_{k+1} - (\lambda_k-\sigma) = \delta_{*} +
    \sigma,
\end{equation}
where $\delta_*$ denotes the eigenvalue gap~\eqref{eq:gap} of
the original NEPv~\eqref{eq:nepv} at $V_*$.
So the level-shifted NEPv~\eqref{eq:nepvls} always has a larger
eigenvalue gap  $\delta_{\sigma*}$ if $\sigma > 0$.

It is well-known that
the larger the eigenvalue gap between the desired eigenvalues
and the rest ones, the easier and more robust it will become to
compute the desired eigenvalues and the associated eigenspace~\cite{Davis:1970,Parlett:1998,Stewart:1990}.
Therefore, it is desirable to have
a large eigenvalue gap $\delta_{\sigma *}$
for the sequence of matrix eigenvalue problems
in the SCF iteration~\eqref{eq:levelshift}, but on the other hand too large a $\sigma$ negatively affects
the local convergence rate of SCF as numerical evidences suggest.
Presently, there are heuristic schemes to choose the level-shift
parameter $\sigma$ in practice, e.g., see~\cite{Yang:2007}.
However, those heuristics cannot explain how
the level-shifting parameter $\sigma$ is
directly affecting the convergence behavior of  SCF~\eqref{eq:levelshift}
for  NEPv~\eqref{eq:shifted}.

We should mention that the conventional restriction of $\sigma>0$ 
for the level-shift parameter~\cite{Saunders:1973, Thogersen:2004,Yang:2007}
is not necessary.
We can see from the eigen-decomposition~\eqref{eq:eighsigma} that,
provided $\sigma \in (-\delta_*,+\infty)$, the eigenvectors $V_*$
always correspond to the $k$ simallest eigenvalues of $H_{\sigma}(V_*)$.

\subsection{Spectral radius for level-shifted {\LOP}}
In what follows, we investigate the local convergence behavior
of the level-shifting scheme by examining
the spectral radius $\rho(\OP_{\sigma})$
for the {\LOP} $\OP_{\sigma}$ of the level-shifted SCF~\eqref{eq:levelshift}.
We will focus on a class of NEPv where certain conditions on the
derivatives of $H(V)$ will apply.
Those conditions hold for NEPv arising in optimization
problems with orthogonality constraints,
as is usually the case for most practical NEPv.

\subsubsection{NEPv from optimization with orthogonality constraints}
~Let $H(V)$ be differentiable.
Define the $\bbR$-linear operator
$\scrQ\colon \bbC^{(n-k)\times k} \to  \bbC^{(n-k)\times k}$
by
\begin{equation}\label{eq:qs}
    \scrQ(Z) :=
    V_{*\bot}^{\HH} \,  \mbox{\bf D} H(V_*)[V_{*\bot} Z]\,  V_*    + \Lambda_{*\bot} Z - Z \Lambda_*.
\end{equation}
We call $\scrQ$ a {\em \QOP} of
NEPv~\eqref{eq:nepv}, and make the following assumption.

\begin{assumption}\label{ass:qs}
The linear operator $\scrQ$ is self-adjoint and positive definite 
with respect to the standard inner product on $\bbC^{(n-k)\times k}$, 
i.e.,
$$
\Re(\tr (Z^{\HH}\scrQ(Z))) = \Re (\tr ([\scrQ(Z)]^{\HH}Z))
\quad \mbox{and} \quad
\mbox{$\Re (\tr (Z^{\HH}\scrQ(Z))) > 0$ for all $Z\neq 0$}.
$$
\end{assumption}

To justify~\Cref{ass:qs}, let us take a quick review of a
class of NEPv arising from the following optimization problems 
with orthogonality constraints
\begin{equation}\label{eq:manopt}
\min_{V\in\bbC^{n\times k}} E(V) \quad \text{s.t.}\quad V^{\HH}V = I_k,
\end{equation}
where $E$ is some energy function satisfying $\nabla E(V) = H(V)V$
(see, e.g.,~\cite{Bao:2013,Yang:2007,Zhang:2014}).
We will make no assumption on the specific form of $E(\cdot)$ to be used.
For the constrained optimization problem~\eqref{eq:manopt}, the associated
Lagrangian function is given by
\[
    L(V):=  E(V) +
    \frac{1}{2}\tr \left(\Lambda^{\HH} (V^{\HH}V - I_k)\right),
\]
where $\Lambda=\Lambda^{\HH}$ is the $k$-by-$k$ matrix 
of Lagrange multipliers. 
We have suppressed $L$'s dependency on $\Lambda$
for notation simplicity.
The first order optimization condition
$\nabla_{V}  L(V) = H(V)V - V\Lambda=0$
leads immediately to  NEPv~\eqref{eq:nepv}. 

Because the target solution $V_*$ of interest is also
a minimizer of~\eqref{eq:manopt}, it needs to
satisfy certain second order condition as well.
Assuming $E(V)$ is also second order differentiable,
by straightforward derivation, 
the Hessian operator of $L(V)$ is given by
\[
    \nabla^2_V L(V_*)[X]  = H(V_*)X +
    \left(\mathbf D H(V_*)[X]\right) V_* - X\Lambda_*,
\]
where $X$ denotes the direction for the evaluation,
and $\mathbf D H(V_*)[\cdot]$ denotes the directional derivative of $H$
as defined in~\eqref{eq:dh}.
By the standard second-order optimization condition~\cite{Nocedal:2006},
this operator needs to be at least positive semi-definite
when restricted to $X=V_{*\bot}Z$ for all $Z\in\bbC^{(n-k)\times k}$,
namely, within the tangent space of the feasible set $V^{\HH}V=I_k$ at $V_*$.
Such a condition is included in Assumption~\ref{ass:qs},
where we further assume the positive definiteness of $\scrQ(\cdot)$.

\subsubsection{Spectral radius of level-shifted {\LOP}}
We can immediately draw from \Cref{lem:onestep} and \Cref{thm:lnorm} 
a conclusion that
the local convergence behavior of
the level-shifted SCF~\eqref{eq:levelshift} is characterized by
the {\LOP} corresponding to the level-shifted
NEPv~\eqref{eq:shifted}. To show the dependency on $\sigma$, we denote
this {\LOP} as
\begin{equation}\label{eq:lsigma}
  \OP_{\sigma}(Z)  =  D_{\sigma}(V_*)\odot (V_{*\bot}^{\HH} \,
                \mbox{\bf D}H_{\sigma}(V_*)[V_{*\bot} Z]\,  V_*),
\end{equation}
where $D_{\sigma}(V_*)\in\bbR^{(n-k)\times k}$  has elements
$D_{\sigma}(V_*)_{(i,j)} = (\lambda_{k+i}(H(V_*)) - \lambda_j(H(V_*)) +
\sigma)^{-1}$.
A representation of $\OP_{\sigma}$ in terms of {\QOP} $\scrQ$ and 
a bound of the spectral radius of $\OP_{\sigma}$ are given 
in the following theorem.

\begin{theorem}\label{thm:rhosigma}
Suppose~\Cref{ass:gap,ass:qs}, and $\sigma \in(-\delta_*,+\infty)$.
The {\LOP} $\OP_{\sigma}(\cdot)$ of the level-shifted SCF~\eqref{eq:levelshift} 
for the level-shifted NEPv~\eqref{eq:nepvls} is given by
    \begin{equation}\label{eq:lsigma2}
        \OP_{\sigma}(\cdot) = D_{\sigma} (V_*)\odot \scrQ(\cdot ) - I_{\rm id},
    \end{equation}
    where $\scrQ$ is the {\QOP} defined in~\eqref{eq:qs}
    and $I_{\rm id}$ denotes the identity operator on the vector space $\bbC^{(n-k)\times k}(\bbR)$.
    Moreover, the spectral radius of $\OP_{\sigma}$ is bounded:   
    \begin{equation}\label{ineq:rhosigma}
        \rho(\OP_{\sigma}) \le
        \max\left\{ \left|\frac{\mu_{\max}}{\sigma + \delta_*} -1\right|,
        \left|\frac{\mu_{\min}}{\sigma +  s_*} -1\right| \right\},
    \end{equation}
where $\mu_{\max}\geq \mu_{\min}>0$ denote
the largest and smallest eigenvalues of the $\bbR$-linear operator $\scrQ$,
$\delta_*$ and  $s_*$ are the spectral gap and span, respectively, i.e., 
$$
\delta_* = \lambda_{k+1}(H(V_*)) - \lambda_k(H(V_*)) 
\quad \mbox{and} \quad 
s_* = \lambda_{n}(H(V_*)) - \lambda_1(H(V_*)).
$$
\end{theorem}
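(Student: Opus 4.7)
The plan has two layers: first establish the algebraic identity \eqref{eq:lsigma2}, then turn the bound on $\rho(\OP_\sigma)$ into a generalized-eigenvalue question that can be handled by Rayleigh-quotient estimates.

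For \eqref{eq:lsigma2}, I would start from the defining formula \eqref{eq:lsigma} and compute the directional derivative of $H_\sigma(V)=H(V)-\sigma VV^{\HH}$ explicitly. Since $\mathbf{D}(VV^{\HH})(V_*)[X]=XV_*^{\HH}+V_*X^{\HH}$, setting $X=V_{*\bot}Z$ and squeezing between $V_{*\bot}^{\HH}$ and $V_*$ collapses the cross terms thanks to $V_{*\bot}^{\HH}V_*=0$, leaving
\[
V_{*\bot}^{\HH}\,\mathbf{D}H_\sigma(V_*)[V_{*\bot}Z]\,V_*=V_{*\bot}^{\HH}\,\mathbf{D}H(V_*)[V_{*\bot}Z]\,V_*-\sigma Z.
\]
Substituting $V_{*\bot}^{\HH}\,\mathbf{D}H(V_*)[V_{*\bot}Z]\,V_*=\scrQ(Z)-\Lambda_{*\bot}Z+Z\Lambda_*$ from the definition \eqref{eq:qs} of $\scrQ$ and observing that $(\Lambda_{*\bot}Z-Z\Lambda_*+\sigma Z)_{ij}=(\lambda_{k+i}-\lambda_j+\sigma)Z_{ij}$ is exactly the Hadamard inverse of $D_\sigma(V_*)$ applied to $Z$, the assumption $\sigma\in(-\delta_*,\infty)$ guarantees those factors are positive, so their Hadamard product with $D_\sigma(V_*)$ returns $Z$ itself, yielding \eqref{eq:lsigma2}.

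For the spectral-radius bound, I would introduce the auxiliary $\bbR$-linear operator $\scrM_\sigma(Z):=R_\sigma\odot Z$, where $(R_\sigma)_{ij}=\lambda_{k+i}-\lambda_j+\sigma$. Then \eqref{eq:lsigma2} reads $\OP_\sigma=\scrM_\sigma^{-1}\circ\scrQ-I_{\rm id}$, so an eigenpair $\OP_\sigma(Z)=\lambda Z$ is equivalent to the generalized eigenvalue problem
\[
\scrQ(Z)=(1+\lambda)\,\scrM_\sigma(Z).
\]
Because $R_\sigma$ is a real matrix with strictly positive entries (using $\sigma>-\delta_*$), $\scrM_\sigma$ is self-adjoint and positive definite with respect to the standard real inner product $\langle X,Y\rangle=\Re\tr(X^{\HH}Y)$; combined with \Cref{ass:qs} this guarantees that every such $\lambda$ is real, hence $\rho(\OP_\sigma)$ is the largest $|\lambda|$ among these generalized eigenvalues.

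The bound then follows from a Courant-Fischer-style sandwich. For any $Z\neq 0$ the generalized Rayleigh quotient satisfies
\[
1+\lambda=\frac{\langle Z,\scrQ(Z)\rangle}{\langle Z,\scrM_\sigma(Z)\rangle},
\qquad
\langle Z,\scrM_\sigma(Z)\rangle=\sum_{i,j}(R_\sigma)_{ij}|Z_{ij}|^2,
\]
whose numerator lies in $[\mu_{\min}\|Z\|_{\F}^2,\mu_{\max}\|Z\|_{\F}^2]$ by \Cref{ass:qs}, and whose denominator lies in $[(\delta_*+\sigma)\|Z\|_{\F}^2,(s_*+\sigma)\|Z\|_{\F}^2]$ by the entrywise bounds on $R_\sigma$. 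Hence $1+\lambda\in[\mu_{\min}/(s_*+\sigma),\,\mu_{\max}/(\delta_*+\sigma)]$, and taking the worst case of $|\lambda|$ gives \eqref{ineq:rhosigma}.

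The main obstacle I anticipate is not any single calculation but the realness of the spectrum: $\scrM_\sigma^{-1}\circ\scrQ$ is not self-adjoint in the standard inner product, so one must pass to the $\scrM_\sigma$-inner product (or equivalently the generalized eigenvalue formulation) before invoking Rayleigh quotients. The Hadamard-product algebra that turns $\Lambda_{*\bot}Z-Z\Lambda_*+\sigma Z$ into the inverse of $D_\sigma(V_*)$ at the entrywise level is the other bookkeeping step that must be handled carefully to make the identity \eqref{eq:lsigma2} come out cleanly.
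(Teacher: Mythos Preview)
Your proposal is correct and follows essentially the same route as the paper: the derivation of \eqref{eq:lsigma2} is identical, and for \eqref{ineq:rhosigma} the paper passes to matrix representations $\mathbf L_\sigma=\mathbf D_\sigma\mathbf Q-I$ and symmetrizes via $\mathbf Q^{1/2}\mathbf D_\sigma\mathbf Q^{1/2}$ with a Loewner-order sandwich, which is exactly your generalized-eigenvalue/Rayleigh-quotient argument in different clothing. The only cosmetic difference is that the paper symmetrizes with $\mathbf Q^{1/2}$ while you symmetrize with $\scrM_\sigma$; both yield the same eigenvalue bounds.
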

\begin{proof}
    By the definition of $H_{\sigma}(V)$ in~\eqref{eq:shifted} and the derivative
    operator~\eqref{eq:dh}, it holds that
    \[
        \mbox{\bf D} H_{\sigma}(V_*)[X] =
        \mbox{\bf D} H(V_*)[X]  - \sigma \mbox{\bf D} (V_*V_*^{\HH})[X]
        = \mbox{\bf D} H(V_*)[X]  - \sigma (V_*X^{\HH} + XV_*^{\HH}).
    \]
    Hence
    \begin{align}
            V_{*\bot}^{\HH} \,
        \mbox{\bf D} H_{\sigma}(V_*)[V_{*\bot} Z]\,  V_*
        &= V_{*\bot}^{\HH} \,  \mbox{\bf D} H(V_*)[V_{*\bot} Z]\,  V_*
        -\sigma\,   Z  \nonumber \\
        &= \scrQ(Z) + Z (\Lambda_* -\sigma I_k) - \Lambda_{*\bot} Z
        = \scrQ(Z) - Z\oslash D_{\sigma}(V_*), \label{eq:vs} 
    \end{align}
    where the second equation is by~\eqref{eq:qs},
    and `$\oslash$' denotes the elementwise division.
    Plug \eqref{eq:vs} into~\eqref{eq:lsigma} to obtain
    \begin{align*}
        {\OP}_{\sigma}(Z)
        = D_{\sigma} (V_*)\odot [\scrQ(Z) - Z\oslash D_{\sigma}(V_*) ]
         = D_{\sigma} (V_*)\odot \scrQ(Z)  - Z.
    \end{align*}
    This proves~\eqref{eq:lsigma2}.

    The vector space $\bbC^{(n-k)\times k}(\bbR)$ has a natural basis
    $\mathcal B := \{E_{ij},\, \imath E_{ij} \colon i=1,\dots,n-k,\ j = 1,\dots,
    k\}$, where the entries of $E_{ij}\in\bbR^{(n-k)\times k}$ are all zeros but 1 as its
    $(i,j)$th entry.
    Let $\mathbf L_{\sigma},\mathbf D_{\sigma}, \mathbf Q\in\bbR^{2N\times 2N}$
    be the matrix representations of the operators $\OP_{\sigma}(\cdot)$,
    $D_{\sigma}(V_*)\odot (\cdot)$, and $\scrQ(\cdot)$ with respect to the basis
    $\mathcal B$, respectively, where $N=(n-k)\times k$.
    It follows from~\eqref{eq:lsigma2} that
    \[
        \mathbf L_{\sigma}  = \mathbf D_{\sigma} \mathbf Q - I_{2N}.
    \]
    Observe that $\mathbf D_{\sigma}$ is a diagonal matrix consisting of elements
    of $D_{\sigma}$,
    and $\mathbf Q$ is symmetric positive definite due to~\Cref{ass:qs}.
    Hence the eigenvalues of $\mathbf D_{\sigma} \mathbf Q $ are all positive, and
    \begin{equation}\label{eq:rhosigma:pf-1}
    \rho(\OP_{\sigma})=\max\{|\lambda_{\max}(\mathbf D_{\sigma} \mathbf Q)-1|,|\lambda_{\min}(\mathbf D_{\sigma} \mathbf Q)-1|\}.
    \end{equation}
    Since the eigenvalues of $\mathbf D_{\sigma} \mathbf Q$ are the same as those of
    $\mathbf Q^{1/2}\mathbf D_{\sigma} \mathbf Q^{1/2}$ and
    $$
    \lambda_{\max}(\mathbf D_{\sigma}) \mathbf Q
    \succeq \mathbf Q^{1/2}\mathbf D_{\sigma} \mathbf Q^{1/2}\succeq \lambda_{\min}(\mathbf D_{\sigma}) \mathbf Q,
    $$
   we have $\lambda_{\max}(\mathbf D_{\sigma} \mathbf Q)\le \mu_{\max} /(\sigma+\delta_*)$ 
    and $\lambda_{\min}(\mathbf D_{\sigma} \mathbf Q)\ge\mu_{\min}/(\sigma+s_*)$.
    Inequality \eqref{ineq:rhosigma} is now a simple consequence of \eqref{eq:rhosigma:pf-1}.
\end{proof}

It follows immediately from~\Cref{thm:rhosigma} that 
\begin{equation*}
\rho(\OP_{\sigma}) < 1 \quad
\mbox{if}\quad 
0 <
\frac{\mu_{\min}}{\sigma + s_*}\leq 
\frac{\mu_{\max}}{\sigma + \delta_*}\le 2, 
\end{equation*}
or equivalently, 
\begin{equation}\label{eq:lsbnd}
\rho(\OP_{\sigma}) < 1 \quad
\mbox{if}\quad \sigma\geq \frac{\mu_{\max}}{2} -\delta_*. 
\end{equation}
Hence for a sufficiently large $\sigma$,
the level-shifted SCF  is locally convergent!
On the other hand,
it also reveals that
$\rho_{\sigma}(\OP)\to 1$ as $\sigma\to +\infty$, implying the slow convergence of
the level-shifted SCF.
Further, if 
good estimates to $\mu_{\min}$, $\mu_{\max}$, $\delta_*$, and $s_*$ 
are available,
we may find a decent $\sigma$ by minimizing the upper bound 
in~\eqref{ineq:rhosigma} as follows: 
the minimizer is achieved when the two terms 
in the right-hand side of~\eqref{ineq:rhosigma}
coincide, which can happen only if 
$$
\frac{\mu_{\max}}{\sigma + \delta_*} -1=1-\frac{\mu_{\min}}{\sigma +  s_*},
$$
due to $\sigma\in ( -\delta_*, +\infty)$. 
This equation has a unique solution $\sigma_*\in (-\delta_*,+\infty)$.
Hence the operator $\OP_{\sigma}$ and 
its spectral radius provide us the understanding of level-shifting strategy 
and an approach to seek an optimal choice of the level-shifting 
parameter $\sigma$, see numerical examples in Section~\ref{sec:examples}.



To end this section, we note that 
the results in this section is consistent with,
and also complements, the convergence analysis
of the level-shifted methods applied to Hatree-Fock
equations~\cite{Cances:2000}.
Using optimization approaches, the authors 
showed that a sufficiently large shift $\sigma$ 
can lead to global convergence.
The condition~\eqref{eq:lsbnd}, on the other hand, provided
a closed-form lower bound on the size of $\sigma$ needed to achieve
local convergence. The bound of~\eqref{eq:lsbnd} involves the 
exact solution $V_*$ and is mostly of theoretical interest. 
For particular applications, it may be possible to have an 
{\em a-priori} estimate of $V_*$, as demonstrated in the examples 
in the next section.

\section{Numerical examples}\label{sec:examples}
In this section, we provide numerical examples to demonstrate
the sharpness and optimality 
of the convergence rate estimates presented in the
previous sections. 
Specifically, the purpose of the examples
is two-fold: Firstly, to illustrate how these convergence results are
manifest in practice, where various convergence rate estimates are compared
and their sharpness in estimating the actual convergence rate is demonstrated;
Secondly, to investigate and gain insight into the influence of the
level-shifting parameter $\sigma$ on the convergence rate 
of SCF \eqref{eq:levelshift}.

\subsection{Experiment setup}\label{sec:setup}

We will perform two case studies, one is a discrete Kohn-Sham equation 
with real coefficient matrices $H(V)$, and the other from a discrete 
Gross-Pitaevskii equation with complex matrices.

All our experiments are implemented and conducted in MATLAB 2019.
In each simulation, the ``exact'' solution $V_*$ is computed by 
the plain SCF \eqref{eq:pscf}, when it is convergent,
to achieve a residual tolerance $\|H(V_*)V_* -V_*\Lambda_*\|_2\leq 10^{-14}$.
When the plain SCF failed to converge, $V_*$
is computed by the level-shifted SCF \eqref{eq:levelshift} 
with a properly chosen shift $\sigma$.

The convergence rate estimates to be investigated include:
\begin{enumerate}[i)] 
\item $\eta_{\czbl}$ by~\cite{Cai:2018}, computed
        as~\eqref{eq:etac} in the Frobenius norm,
    \item $\eta_{\sup} = \vvvert \OP\vvvert_{\F}$ in~\eqref{eq:etainfty}
      in the Frobenius norm, and
\item  $\eta_{\sup,\infty}=\rho(\OP)$ in~\eqref{eq:etainfty}.
\end{enumerate}
These convergence rate estimates will be compared with the {\em observed convergence rate} of SCF,
estimated from the convergence history of the SCF iteration 
by the least squares approximation on the last few iterations.


\paragraph{Evaluation of $\eta_{\sup,\infty} (=\rho(\OP))$}
Despite a matrix representation $\mathbf L$ is involved in the
definition~\eqref{eq:sprad}, 
its explicit formulation is not needed for computing
$\rho(\OP)$.
Recall that
$\OP\colon \bbC^{p\times k}\to \bbC^{p\times k}$ is an
$\bbR$-linear operator.
By viewing a complex matrix $X = X_r + \imath X_i\in\bbC^{p\times k}$
as a pair of real matrices $(X_r,X_i)$  consisting of the real
and imaginary parts,
we express $\OP$ 
as a linear operator
$\what{\OP}\colon
\bbR^{p\times k}\times\bbR^{p\times k} \to \bbR^{p\times
k}\times\bbR^{p\times k}$,
\begin{equation}
    \what{\OP}(X_r,X_i) = \left(\Re(\OP(X)), \Im(\OP(X))\right).
\end{equation}
The input (as well as the output) matrix pair $(X_r,X_i)$
can be regarded as a real ``vector'' of length-$2N$.
The largest eigenvalue in magnitude of the linear
operator $\what{\OP}$ can be computed conveniently by
MATLAB \texttt{eigs} function as follows: 
\begin{verbatim}
v2m = @(x) reshape(x(1:N)+1i*x(N+1:end), p, []); % real vec x -> mat X
m2v = @(X) [real(X(:)); imag(X(:))];             % mat X -> real vec x
hatL = @(x) m2v(L(v2m(x))));                     % operator hat L
lam_max = eigs(hatL, 2*N, 1);                    % largest eigenval.
\end{verbatim}

\paragraph{Evaluation of $\eta_{\sup}$ and $\eta_{\czbl}$}
The induced norm $\vvvert\OP\vvvert_{\F}$ in~\eqref{eq:lam} is defined as the square root of the
largest eigenvalue of $\OP^* \circ \OP$, which is also an $\bbR$-linear
operator.
We can use exactly the same approach  above to obtain
$\lambda_{\max}(\OP^* \circ \OP)$.
Since the operator $\OP^* \circ \OP$ is self-adjoint,
the largest eigenvalue is always a real number.
In analogy, for $\eta_{\czbl}$ in~\eqref{eq:etac},  $\vvvert \OP_{\czbl}\vvvert_{\F}$
 can be computed as 
the square root of ${\lambda_{\max}(\OP_{\czbl}^* \circ \OP_{\czbl})}$.

\subsection{Single particle Hamiltonian}  
Let us consider an NEPv~\eqref{eq:nepv} with a real coefficient matrix-valued function
\begin{equation}\label{eq:hamilton}
H(V) = L + \alpha \,  \Diag (L^{-1} \diag (VV^{\T})),
\end{equation}
where tridiagonal matrix $L=\tridiag (-1,2,-1)\in\bbR^{n\times n}$ is a discrete 1D
Laplacian, $\alpha>0$ is a given parameter,
and $V\in\bbO^{n\times k}:=\{X\in\bbR^{n\times k}\,:\, X^{\T}X=I_k\}$.
$H(V)$  is known as the single-particle Hamiltonian
arising from discretizing an 1D Kohn-Sham equation in electronic structure
calculations, and has become a standard testing problem for investigating
the convergence of SCF due to its simplicity, 
see, e.g.,~\cite{Cai:2018,Liu:2014,Yang:2009,Zhao:2015}.
$H(V)$ is differentiable.
By a straightforward calculation, the directional derivative operator $\mbox{\bf D}H(V_*)$ defined in~\eqref{eq:dh}
is given by
\[
\mathbf D H(V)[X] = 2\alpha\,  \Diag (L^{-1} \diag (XV^{\T})),
\]
which is linear in $X$.

The {\LOP} $\OP$ in~\eqref{eq:ls} 
of the plain SCF \eqref{eq:pscf} is given by
\begin{equation}\label{eq:lreal}
\OP(Z) = 2\alpha \,  D(V_*)\odot \bigg(V_{*\bot}^{\T}\,
\Diag (L^{-1}\diag (V_{*\bot} Z V_*^{\T}))\,  V_* \bigg).
\end{equation}
The adjoint operator $\OP^*$ 
is given by
\begin{equation}\label{eq:lsreal}
    \OP^*(Y) = 2\alpha \,  V_{*\bot}^{\T}\,
    \Diag \left(L^{-\T}\diag \big(V_{*\bot}( D(V_*)\odot
    Y)V_*^{\T}\big)\right)\,  V_* ,
\end{equation}
see Appendix~\ref{app:adj} for the derivation. 

The {\LOP}
$\OP_{\sigma}$ \eqref{eq:lsigma} 
of the level-shifted SCF \eqref{eq:levelshift} 
is given by
\begin{equation} 
\OP_{\sigma}(Z) 
= D_{\sigma} (V_*) \odot \scrQ(Z) - I_{\rm id},   
\end{equation}
where $\scrQ$ is the {\QOP} $\scrQ$ defined in~\eqref{eq:qs} is 
given by 
\begin{equation}
    \scrQ(Z) = 2\alpha\,  V_{*\bot}^{\T}\,  \Diag (L^{-1}\diag (V_{*\bot} Z
    V_*^{\T}))\,  V_*    + (\Lambda_{*\bot} Z- Z\Lambda_*).
\end{equation}
The largest eigenvalue $\mu_{\max}$ of $\scrQ$ can be bounded
as follows:  let $Z\in\bbR^{(n-k)\times k}$ be the 
corresponding eigenvector of
$\mu_{\max}$, then
\begin{align}
    \mu_{\max}
    = \frac{\|\scrQ(Z)\|_{\F}}{\|Z\|_{\F}}
    &\leq  2\alpha\,  \frac{\|\Diag (L^{-1}\diag (V_{*\bot} Z
    V_*^{\T}))\|_{\F}}{\|Z\|_{\F}} + s_*\notag \\
    & \leq 2\alpha \,  \|L^{-1}\|_2 + s_*
    \leq 3\alpha \,  \|L^{-1}\|_2 + 4,\notag
\end{align}
where $s_*$ is the spectral span  of $H(V_*)$, 
and for the last inequality we have used 
the inequalities $s_* \leq \lambda_n(H(V)) \leq
\|L\|_2 + \alpha \|L^{-1}\|_2$ due to~\eqref{eq:hamilton},
and $\|L\|_2 \leq 4$.

Recalling the lower bound in \eqref{eq:lsbnd}  for the level-shifting parameter
$\sigma$, we find 
\begin{equation}\label{eq:sbnd}
\sigma \geq \frac{3}{2}\,  \alpha\,  \|L^{-1}\|_2 + 2
\geq \frac{\mu_{\max}}{2} - \delta_*
\end{equation}
is sufficient to ensure local convergence of SCF \eqref{eq:levelshift}.
The first inequality provides an {\em a-priori} lower bound on the shift
$\sigma$.  In practice, this crude bound is a bit pessimistic though.
But it does reveal two key contributing factors
--- the parameter $\alpha$ and size $n$ of the
problem due to the fact that 
$\|L^{-1}\|_2 = 2^{-1}(1-\cos(\frac{\pi}{n+1}))^{-1} = \calo(n^2) $
    for the 1D Laplacian \cite[Lemma~6.1]{Demmel:1997}
--- that tend to negatively affect the size of shift.

\begin{example}\label{ex:single}
In this example, we compare the sharpness 
of the three convergence rate estimates of the plain SCF.
We take $n=10$ and $k=2$, and use different $\alpha$ 
ranging from $0$ to $1$ in the Hamiltonian~\eqref{eq:hamilton}.
For each run of SCF, the starting vectors are set to be 
the basis of the $k$ smallest
eigenvalues of $L$. The results are 
shown in Figure~\ref{fig:ex1}.
A few observations are summarized as follows: 

\begin{figure}[t]
\begin{center}
\includegraphics[width=0.49\textwidth]{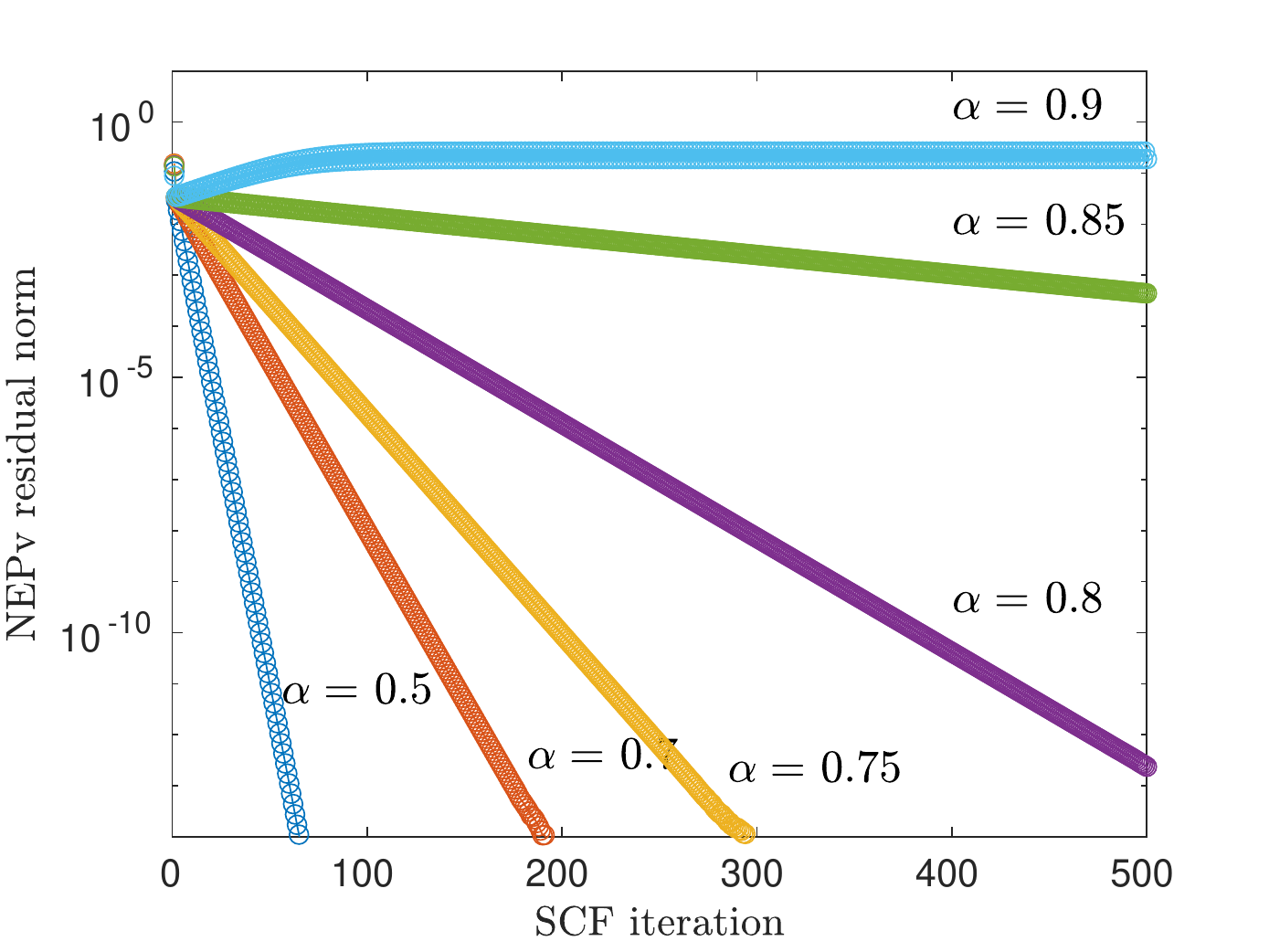}
\includegraphics[width=0.49\textwidth]{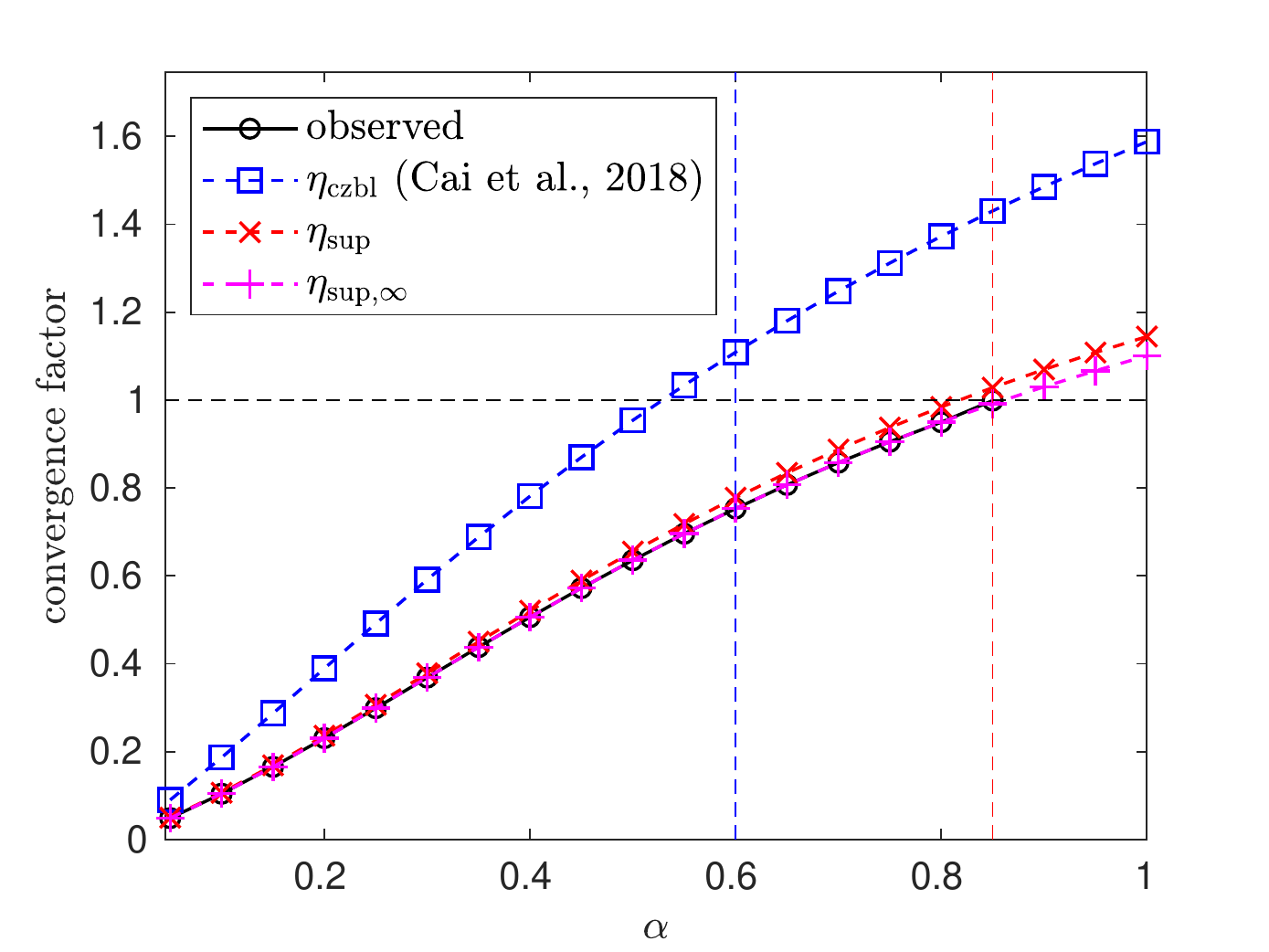}
\end{center}
\caption{
Example~\ref{ex:single}: 
convergence history
of residual norm $\|H(V_i)V_i-V_i\Lambda_i\|_2$ by the plain SCF \eqref{eq:pscf}
for selected $\alpha$ (left);  and
convergence rate estimates as $\alpha$ varies (right).
}\label{fig:ex1}
\end{figure}

\begin{enumerate}[(a)]
\item
For $\alpha=0$, the NEPv reduces to a standard eigenvalue problem
$LV=V\Lambda$, for which SCF converges in one iteration.
As $\alpha$ increases, SCF faces increasing challenges to converge.
In particular, for $\alpha$ larger than $0.85$, the plain SCF becomes divergent.
For those $\alpha$, the ``exact'' solutions $V_*$ used to calculate convergence factors are
computed by the level-shifted SCF.

\item
The asymptotic average contraction factor $\eta_{\sup,\infty} (= \rho(\OP))$ 
successfully 
predicts the convergence of SCF in all cases, and perfectly captures the 
convergence rate.
The factor $\eta_{\sup,\infty}$ yields excellent estimation after only
a small number of iterative steps, although 
strictly speaking, it is conclusive only as the iteration number
approaches infinity. 

\item
The contraction factor estimate $\eta_{\sup}$ is an
overestimate and usually provides a good prediction of local convergence.
It failed slightly at $\alpha=0.85$, where up to 10 digits: 
\[ 
\begin{array}{rclrrcl}  
 \mbox{observed} &  = & 0.9913931781, \,\, & \, \, \eta_{\sup,\infty} & = & 0.9913931591, \\
\eta_{\sup}      &  = & 1.028434776, \,\,   & \, \, \eta_{\czbl}      & = & 1.430511920.
\end{array} 
\] 
The gap between $\eta_{\sup,\infty}$ and
$\eta_{\sup}$ implies $\OP$ is a non-normal operator as discussed
in~\Cref{sec:sprd}.

\item
In comparison, the estimate 
$\eta_{\czbl}$ by~\cite{Cai:2018} is less precise. 
In particular, it fails to correctly indicate
the convergence of the plain SCF starting at $\alpha=0.55$,
which is in contrast to $\eta_{\sup}$ starting at $0.85$.
We mention that, for this same experiment,
it was illustrated in~\cite{Cai:2018} that
$\eta_{\czbl}<1$ for all $\alpha\leq 0.6$
(marked as dashed vertical line).
\end{enumerate}

\end{example}

\begin{example}\label{ex:single:ls}
In this example, we examine
{the convergence of the level-shifted SCF~\eqref{eq:levelshift} 
with respect to  the shift $\sigma$}.
The testing problem is the same as~\Cref{ex:single} 
but with a fixed $\alpha = 1$,
for which the plain SCF \eqref{eq:pscf} is divergent.
We apply the level-shifted SCF with various choices of $\sigma$ for the solution.
The convergence history and
the corresponding spectral radius of the operator $\OP_{\sigma}$
in~\eqref{eq:lsigma} is depicted in Figure~\ref{fig:ex1rhos}.

From the spectral radius plot on the right side of
Figure~\ref{fig:ex1rhos}, we observe that
$\rho(\OP_{\sigma})$ dropped quickly below $1$. 
The minimal value $\rho(\OP_{\sigma})\approx 0.33$
at $\sigma\approx 0.36$ and leads to rapid convergence of SCF
as shown in the left plot.
As $\sigma$ grows, $\rho(\OP_{\sigma})$ monotonically increases towards 1.
Such a behavior of $\rho(\OP_{\sigma})$ is consistent with the
bound obtained in~\Cref{thm:rhosigma}, governed by rational functions
in the form of $|1-a/(\sigma+b)|$ with $a,b>0$.

\begin{figure}[t] 
\begin{center}
\includegraphics[width=0.49\textwidth]{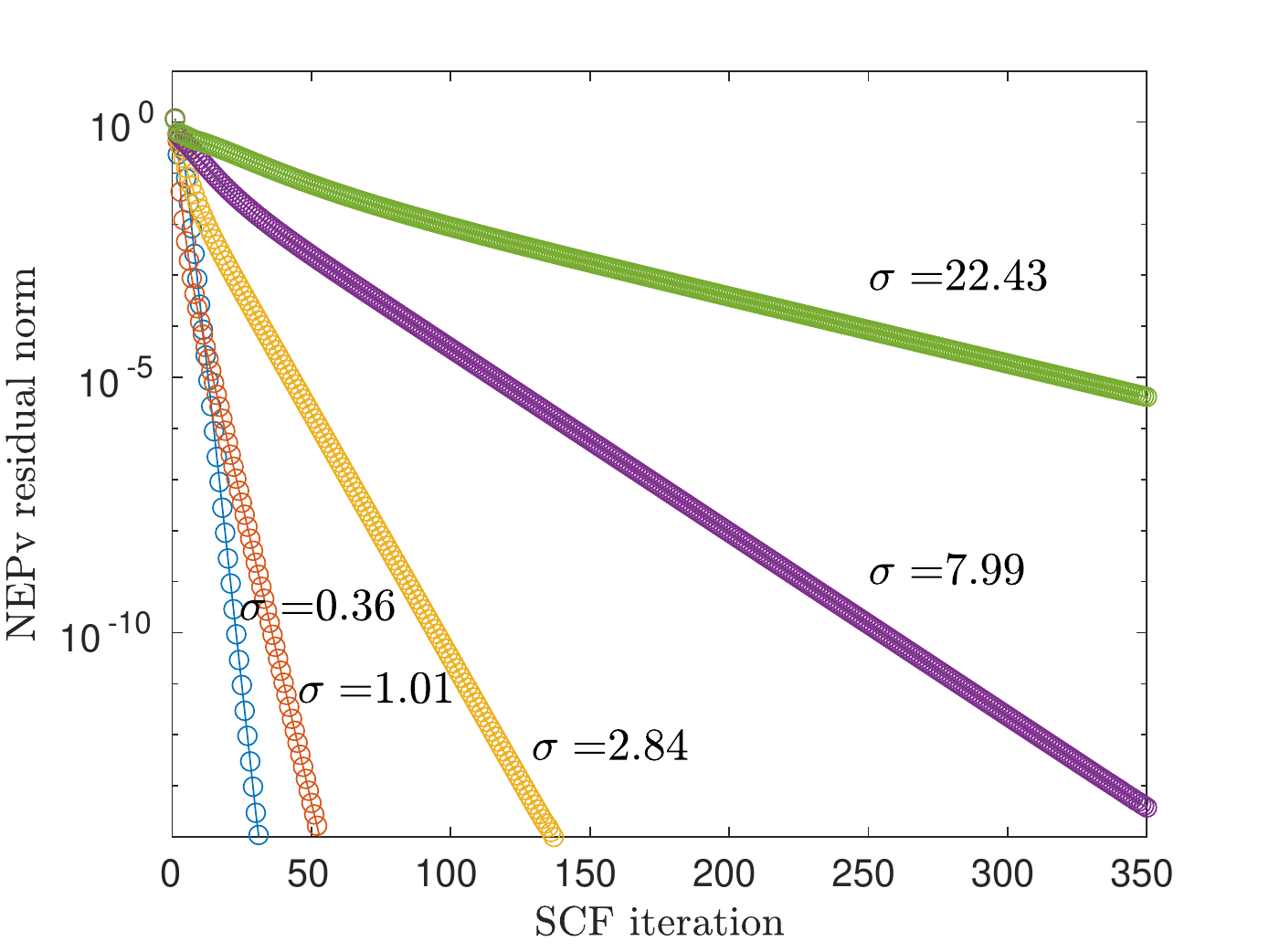}
\includegraphics[width=0.49\textwidth]{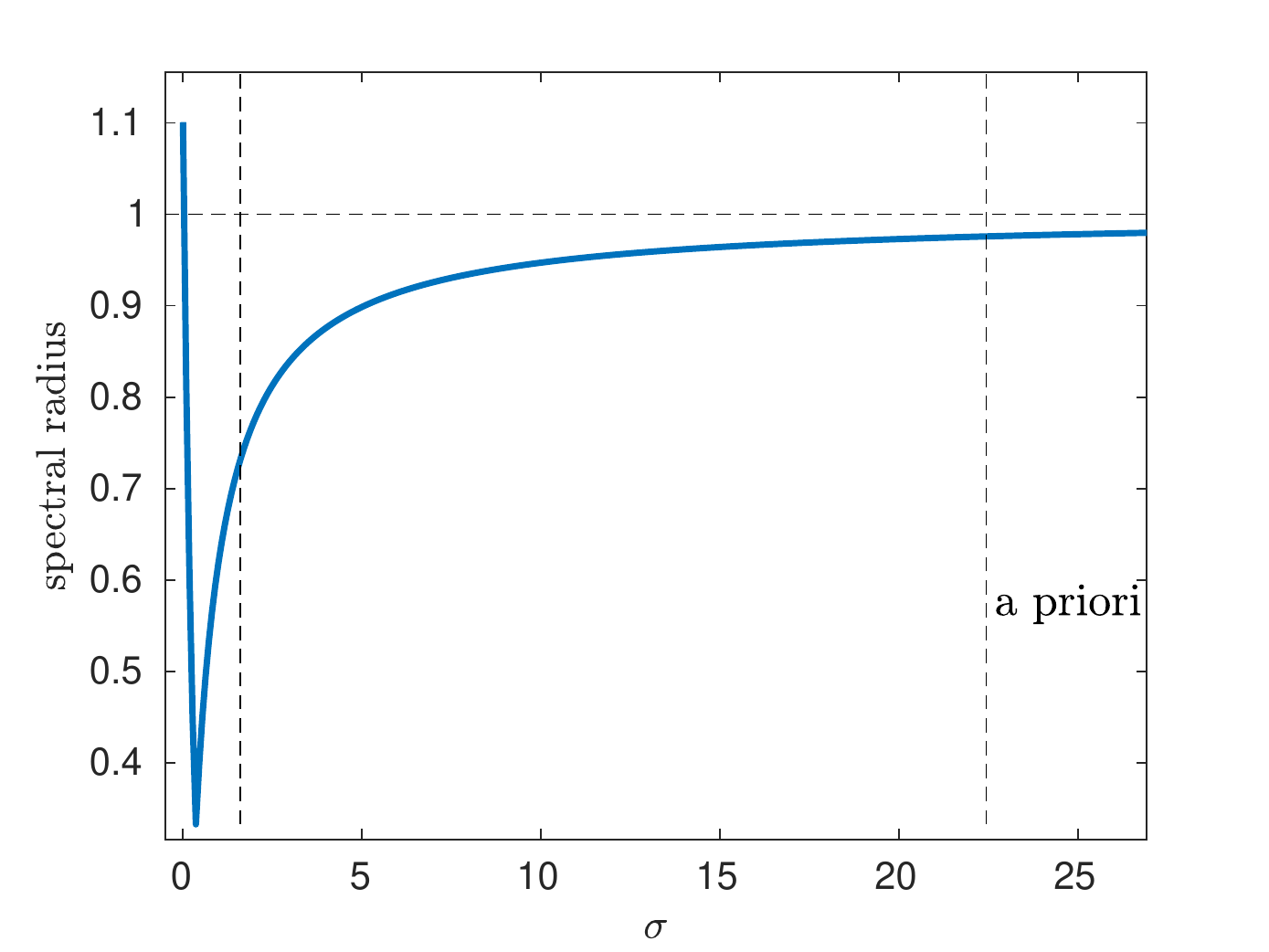}
\end{center}
\caption{Example~\ref{ex:single:ls}:
convergence history of residual norm $\|H(V_i)V_i-V_i\Lambda_i\|_2$
by the level-shifted SCF \eqref{eq:levelshift} with selected $\sigma$ (left);
spectral radius of $\rho(\OP_{\sigma})$ as
shift $\sigma$ varies (right), where the first vertical dash line 
is $\sigma=\frac{\mu_{\max}}{2} -\delta_*$
suggested by~\eqref{eq:lsbnd} and 
the second is {\em a-priori} 
$\sigma = \frac{3}{2}\,  \alpha\,  \|L^{-1}\|_2 + 2$ 
suggested by~\eqref{eq:sbnd},
and the optimal shift is $\sigma\approx 0.36$.
The $H(V)$ is given by~\eqref{eq:hamilton} with $\alpha=1$.
}\label{fig:ex1rhos}
\end{figure}

The sharp turning  of the curve of $\rho(\OP_{\sigma})$ reveals the
challenge in finding the optimal $\sigma$.
The values of spectral radius grows quickly as $\alpha$ moves away 
from the optimal shift. 
We note that both the theoretic lower bound in~\eqref{eq:lsbnd}
and {\em a-priori} estimate~\eqref{eq:sbnd} fall correctly
into the convergence region.
The {\em a-priori} bound provided a pessimistic estimate of $\sigma$, that
leads to a less satisfactory convergence rate of 
the level-shifted SCF~\eqref{eq:levelshift}.


\end{example}

\subsection{Gross--Pitaevskii equation}
In this experiment, we consider NEPv with complex 
coefficient matrices $H(V)$ given by
\begin{equation}\label{eq:gpe}
H(V) = A_f + \beta\,  \Diag (|V|)^2,
\end{equation}
where $A_f\in\bbC^{n\times n}$ is a  Hermitian matrix
and positive definite,
$\beta>0$ is a parameter, $V\in\bbC^n$ is a complex vector,
and $|\cdot|$ takes elementwise absolute value.
Such an NEPv arises from discretizing the Gross-Pitaevskii equation (GPE)
for modeling the physical phenomenon of 
Bose--Einstein condensation~\cite{Bao:2004,Jarlebring:2014,Jia:2016,Li:2020}.

The matrix $A_f$ in~\eqref{eq:gpe} is dependent of a potential
function $f$.  For illustration, we will discuss 
a model 2D GPE studied in~\cite{Jarlebring:2014},
where for a given potential function $f(x,y)$ over a two dimension domain
$[-\ell,\ell]\times[-\ell,\ell]$,
the corresponding matrix 
\begin{equation}\label{eq:af}
    A_f = \Diag(\widetilde f)-\frac{1}{2}M -\imath \omega M_{\phi},
\end{equation}
where
\[
\text{$\widetilde f = h^2\,
\left[f(x_1,y_1),\dots,f(x_N,y_1),f(x_1,y_2),\dots,f(x_N,y_2),\dots,f(x_N,y_N)\right]^{\T}\in\bbR^{N^2}$}
\]
with $\{x_i\}_{i=1}^N$ and  $\{y_i\}_{i=1}^N$ being interior points of the interval  $[-\ell ,\ell ]$ from
the $N+2$ equidistant discretization 
with spacing $h=\frac{2\ell}{N+1}$.
The matrices $M$, $M_{\phi}$ are given by
\[
M = D_{2,N}\otimes I + I\otimes D_{2,N},\,\,
M_{\phi} = h\, \Diag(y_1,\dots,y_N) \otimes D_N - D_N \otimes
    \left(h\, \Diag(x_1,\dots,x_N)\right),
\]
with $N \times N$ tridiagonal matrices 
$D_{N}=\tridiag (-\frac{1}{2},0,\frac{1}{2})$
and $D_{2,N}= \tridiag (1,-2,1)$.

Since $V$ is a vector, 
by definition~\eqref{eq:dh}
the directional derivative operator of $H(V)$ is given by
\[
\mathbf D H(V)[X] = 2\beta\, \Diag (\Re (\overline{V}\odot X)).
\]
The {\LOP} of the plain SCF $\OP:\bbC^{n-1}\to \bbC^n$ in~\eqref{eq:ls} is
\begin{equation}\label{eq:lcmpx}
\OP(Z) = 2\beta \,  D(V_*)\odot (V_{*\bot}^{\HH}\,
\Diag (
\Re (\overline{V}_*\odot (V_{*\bot} Z)))\,  V_*),
\end{equation}
and its adjoint operator $\OP^*$, with respect to the standard inner product in $\bbC^{(n-k)\times k}$  ($k=1$), i.e.,
$\langle \OP(Z), Y\rangle\equiv \Re (\tr(Y^{\HH}\OP(Z)))=\langle Z, \OP^*(Y)\rangle\equiv \Re (\tr([\OP^*(Y)]^{\HH} Z))$ for any $Y,\,Z\in\bbC^{(n-k)\times k}$,
is given by
\begin{equation}\label{eq:lscmpx}
   \OP^*(Y) = 2\beta \,  V_{*\bot}^{\HH}\,
   \left(\Re \left(\diag (V_{*\bot}( D(V_*)\odot
       Y)V_*^{\HH})\right)\odot V_*\right),
\end{equation}
see Appendix~\ref{app:adj} for the derivation. 

For the level-shifted SCF, the {\LOP}
$\OP_{\sigma}$ in \eqref{eq:lsigma} is given
by
\begin{equation} 
\OP_{\sigma}(Z) 
= D_{\sigma} (V_*) \odot \scrQ(Z) - I_{\rm id}, 
\end{equation}
where the {\QOP} $\scrQ(Z)$ 
is given by
\begin{equation}
    \scrQ(Z) =
    2\beta\,  V_{*\bot}^{\HH}\, \Diag (
     \Re (\overline{V_*}\odot (V_{*\bot}Z))\,  V_*
    + (\Lambda_{*\bot} Z- Z\Lambda_*).
\end{equation}
The largest eigenvalue $\mu_{\max}$ of $\scrQ$ can be bounded
as follows. Let $Z\in\bbC^{n-1}$ be the eigenvector associated with $\mu_{\max}$. Then
\begin{align}
    \mu_{\max}
    = \frac{\|\scrQ(Z)\|_{\F}}{\|Z\|_{\F}}
    &\leq  2\beta\,  \frac{\|
        \Diag ( \Re (\overline{V}\odot (V_{*\bot}Z))\|_{\F}}{\|Z\|_{\F}} + s_*\notag \\
    & \leq 2\beta + s_*
    \leq 3\beta + \|A_f\|_2,\notag
\end{align}
where $s_* = \lambda_n(H(V_*)) - \lambda_1(H(V_*))$ is the spectral
span, and for the last inequality we have used
the inequalities $s_* \leq \lambda_n(H(V_*)) \leq \beta + \|A_f\|_2 $
due to $H(V)$ in \eqref{eq:gpe} being positive definite.
Consequently,  the lower bound on $\sigma$ in~\eqref{eq:lsbnd} yields
\begin{equation}\label{eq:sigmagp}
    \sigma \geq \frac{1}{2}(3\beta+\|A_f\|_2)
\end{equation}
to ensure the local convergence of the level-shifted SCF.


\begin{example}\label{ex:gp}
In this example, we select the parameters
$\ell=1$, $\omega = 0.85$, and $N=10$ (hence $n=100$).
We use a radial harmonic potential
$f(x,y) = (x^2+y^2)/2$.
Various values of $\beta$ ranging from 
$0.5$ to $5$ have been tried.  The simulation results
    are shown in~Figure~\ref{fig:ex:gp}.

\begin{figure}[t]
\begin{center}
\includegraphics[width=0.49\textwidth]{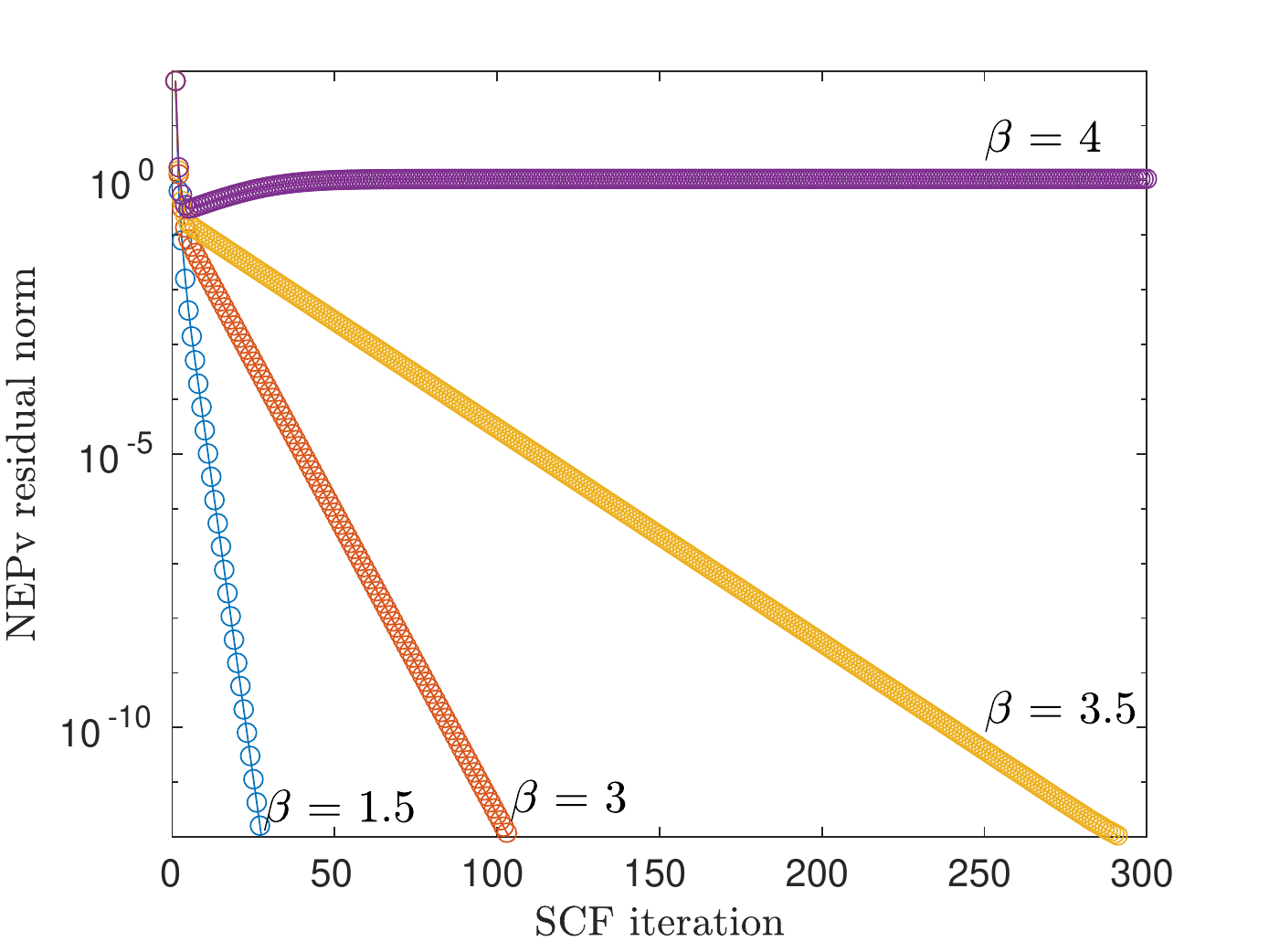}
\includegraphics[width=0.49\textwidth]{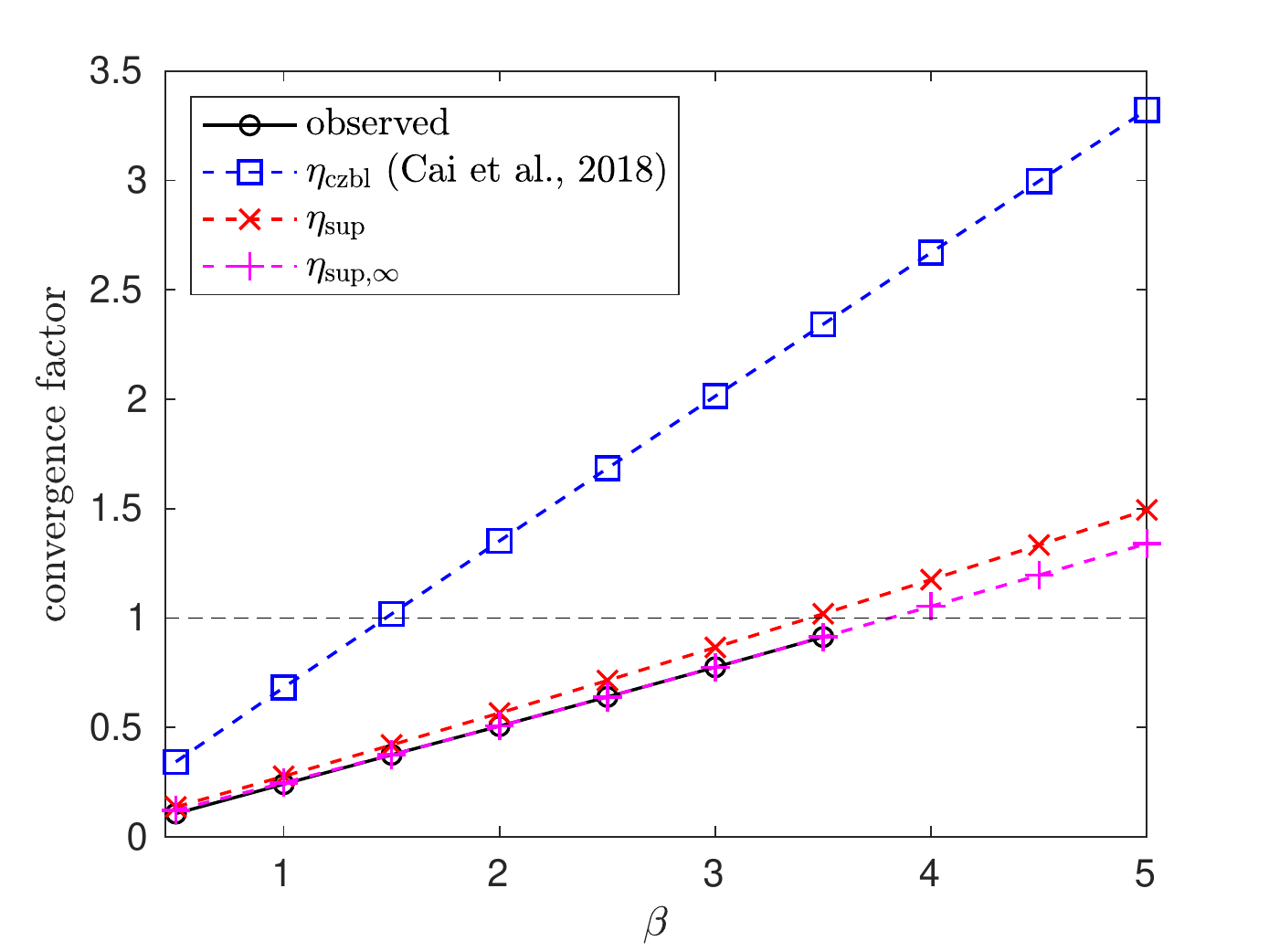}
\end{center}
\caption{Example~\ref{ex:gp}: (Left) Convergence history
of residual norm $\|H(V_i)V_i-V_i\Lambda_i\|_2$ by the plain SCF \eqref{eq:pscf}
for selected $\beta$; (Right)
Convergence rate estimates as $\beta$ varies.
}\label{fig:ex:gp}
\end{figure}

It is observed that
the plain SCF becomes slower and slower and eventually divergent 
as $\beta$ increases.
Again, the spectral radius $\rho(\OP_{\sigma})$ and $\eta_{\sup}$ can
well capture true convergence behavior.
In particular,  at $\beta = 3.5$, we find that up to 7 digits, 
\[ 
\mbox{observed}     = 0.9136140, \quad
\eta_{\sup,\infty}  = 0.9136173, \quad
\eta_{\sup}         = 1.019727, \quad
\eta_{\czbl}        = 2.342686
\] 
Again, we see the sharpness of the estimate $\eta_{\sup,\infty}$. 

The performance of the level-shifted SCF with respect to different
shifts $\sigma$ is shown in Figure~\ref{fig:ex:gp1:rhos},
where we observe a similar convergence behavior to 
Figure~\ref{fig:ex1rhos} of Example~\ref{ex:single:ls}
on the impact of the choice of shift $\sigma$.

\begin{figure}[ht] 
\begin{center}
\includegraphics[width=0.49\textwidth]{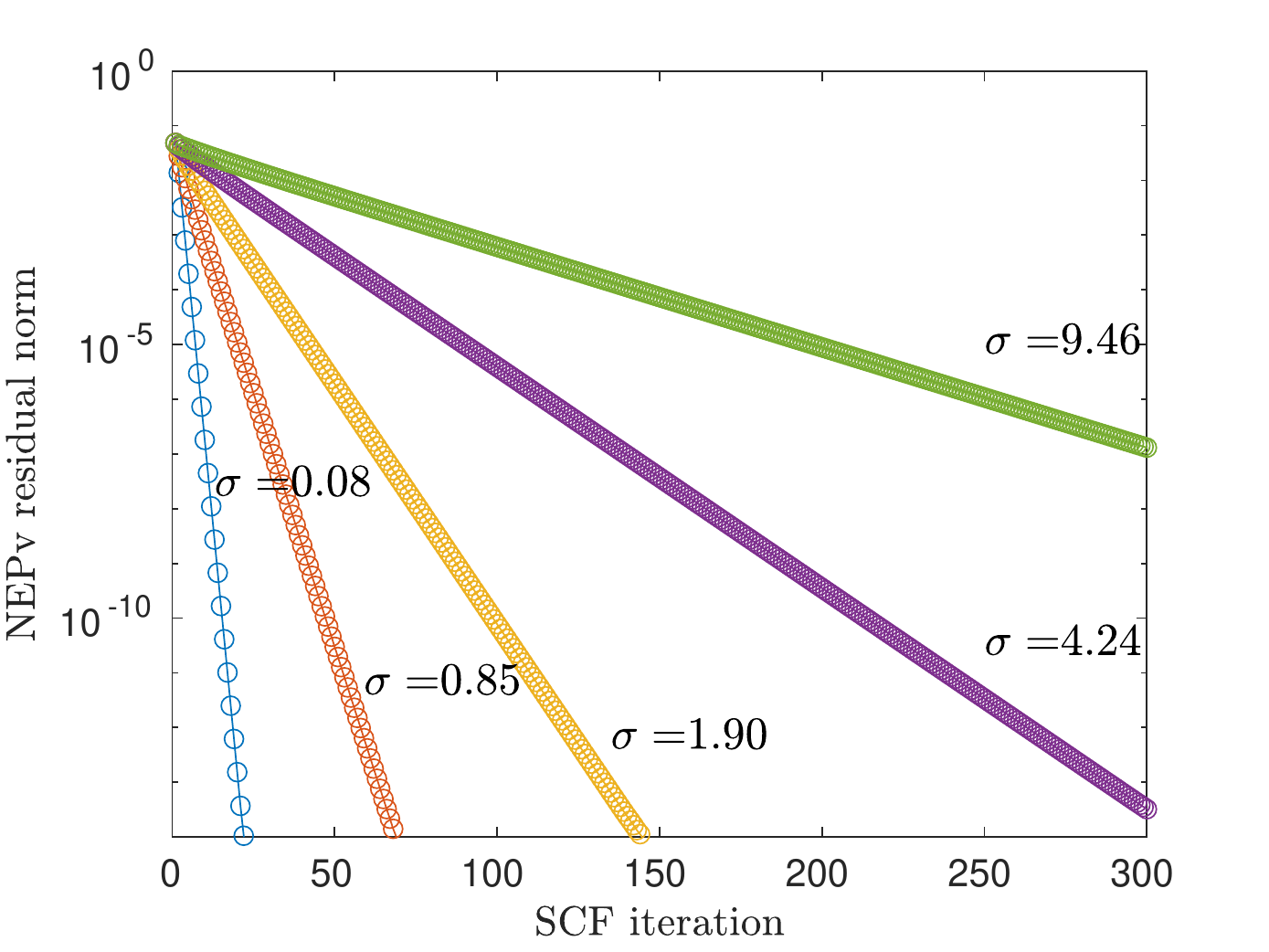}
\includegraphics[width=0.49\textwidth]{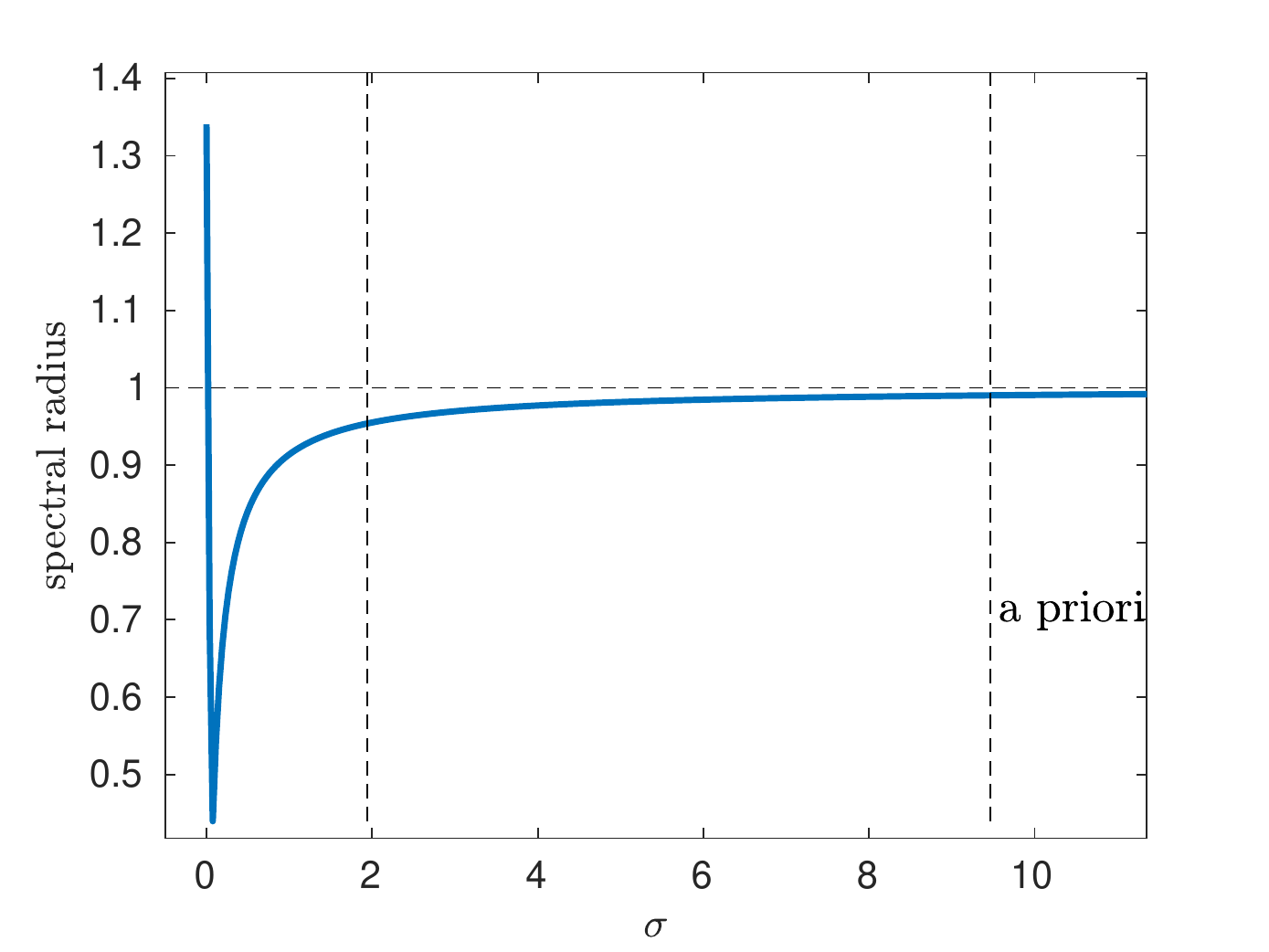}
\end{center}
\caption{Example~\ref{ex:gp}:
convergence history of residual norm $\|H(V_i)V_i-V_i\Lambda_i\|_2$
by the level-shifted SCF \eqref{eq:levelshift} with selected $\sigma$ (left);
spectral radius of $\rho(\OP_{\sigma})$ as
shift $\sigma$ varies (right), where the first vertical dash line 
is $\sigma=\frac{\mu_{\max}}{2} -\delta_*$
suggested by~\eqref{eq:lsbnd} and 
the second is {\em a-priori} $\sigma =
\frac{1}{2}(3\beta+\|A_f\|_2)$ suggested by~\eqref{eq:sigmagp},
and the optimal shift is $\sigma\approx 0.08$.
The $H(V)$ is given by~\eqref{eq:gpe} with $\beta=5$.
}\label{fig:ex:gp1:rhos}
\end{figure}

\end{example}

\begin{example}\label{ex:gp2}
This is a repeat of Example~\ref{ex:gp}, except using
a non-radical harmonic potential function  $f(x,y) = (x^2+100y^2)/2$.
The plots in Figure~\ref{fig:ex:gp:2} show
a slightly different performance of the plain SCF \eqref{eq:pscf} 
compared to the radical harmonic case of Example~\ref{ex:gp}.
The sharpness of the estimate $\eta_{\sup,\infty}$
on the local convergence rate can be seen at $\beta = 2.2$, where
up to  7 digits:
\[
\mbox{observed}     = 0.9652599, \quad  
\eta_{\sup,\infty}  = 0.9652614, \quad   
\eta_{\sup}         = 1.073434, \quad   
\eta_{\czbl}        = 2.043247 
\] 
The performance of the level-shifted SCF is depicted 
in Figure~\ref{fig:ex:gp2:rhos}.
Again we observe a similar convergence behavior 
to Example~\ref{ex:gp} with repect to the choice of shift $\sigma$. 

\begin{figure}[ht]
\begin{center}
\includegraphics[width=0.49\textwidth]{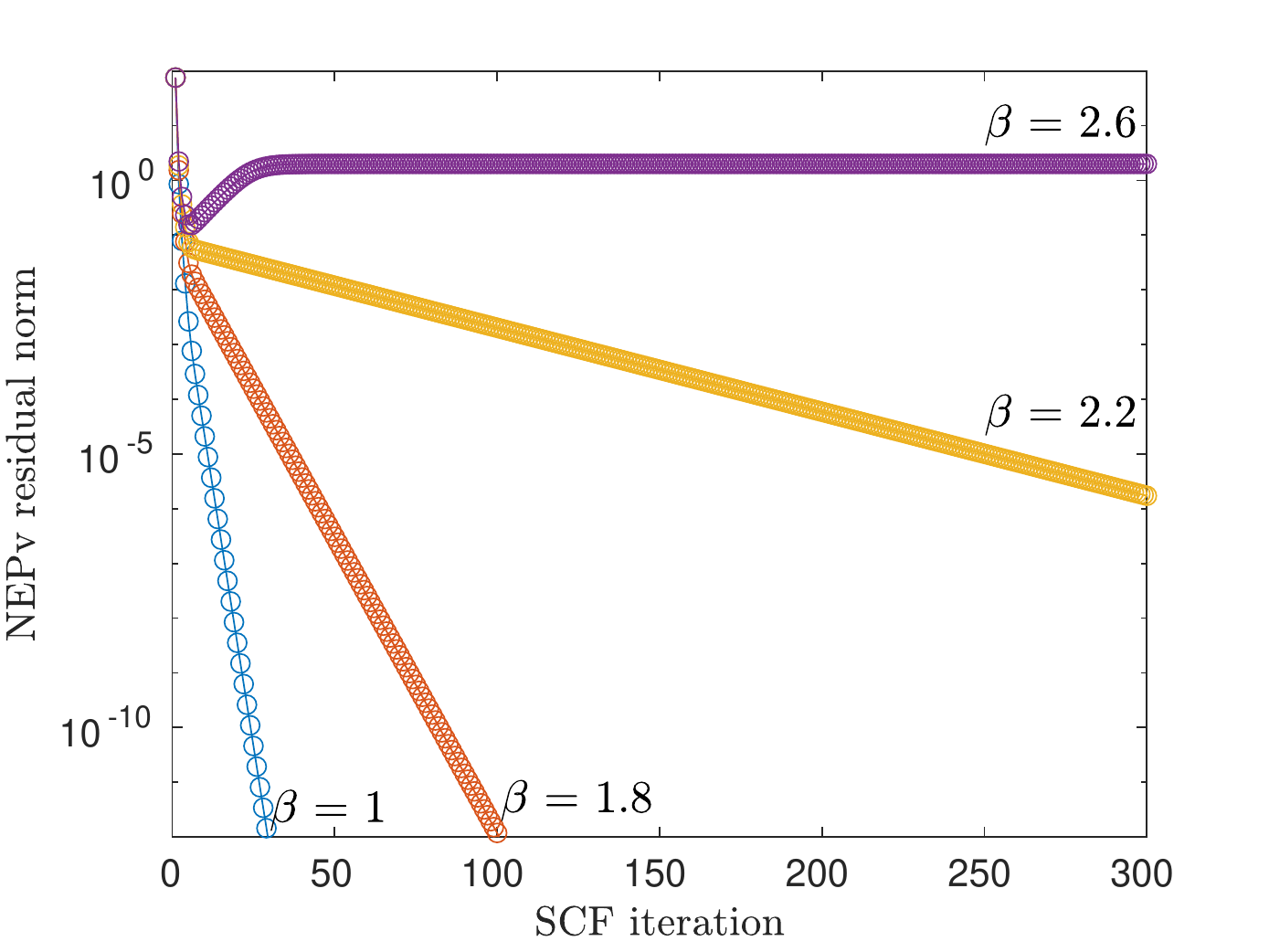}
\includegraphics[width=0.49\textwidth]{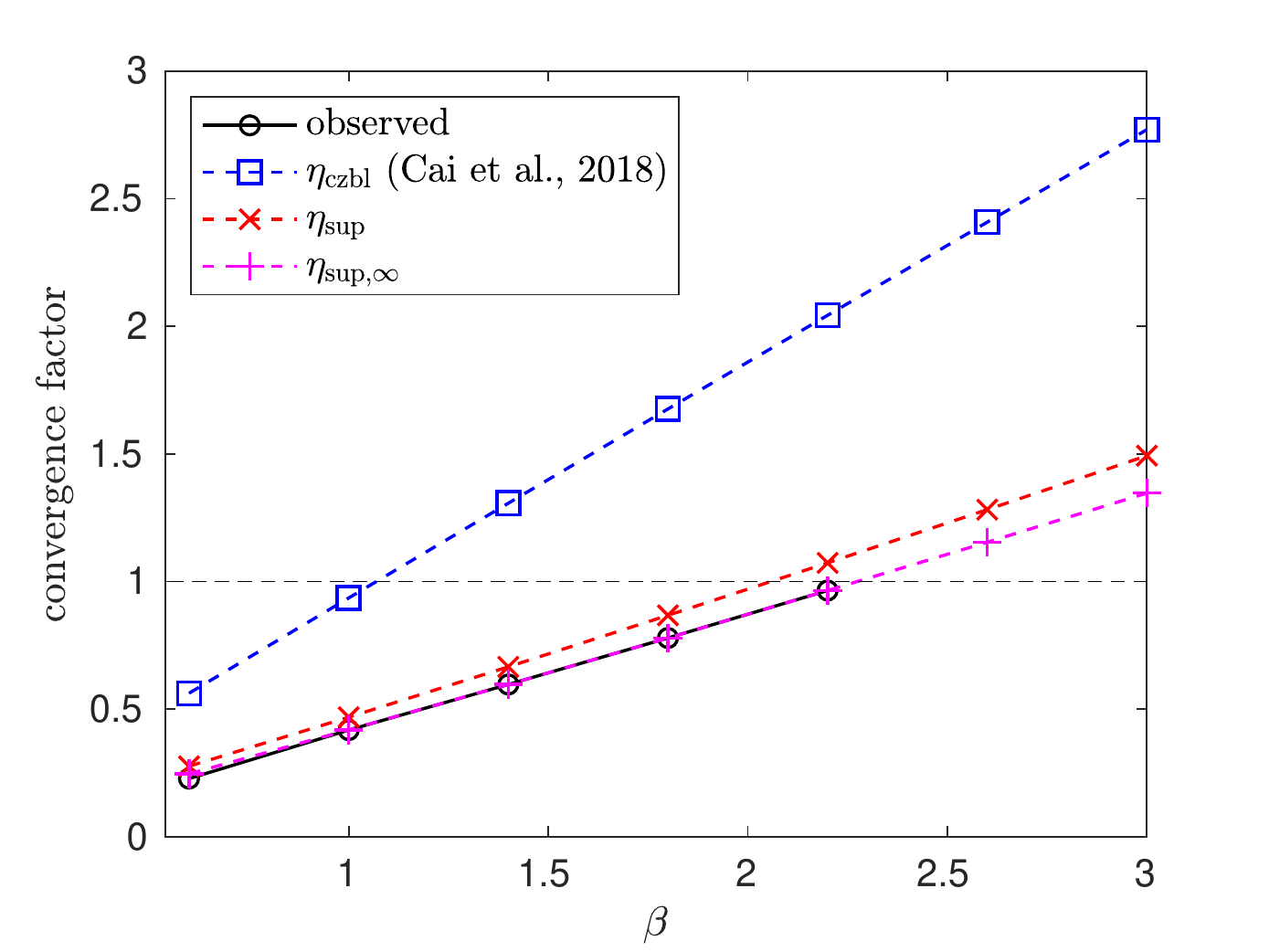}
\end{center}
\caption{Example~\ref{ex:gp2}: convergence history
of residual norm $\|H(V_i)V_i-V_i\Lambda_i\|_2$ by 
the plain SCF \eqref{eq:pscf} for selected $\beta$ (left); 
convergence rate estimates as  $\beta$ varies (right).
}\label{fig:ex:gp:2}
\end{figure}

\begin{figure}[ht]
\begin{center}
\includegraphics[width=0.49\textwidth]{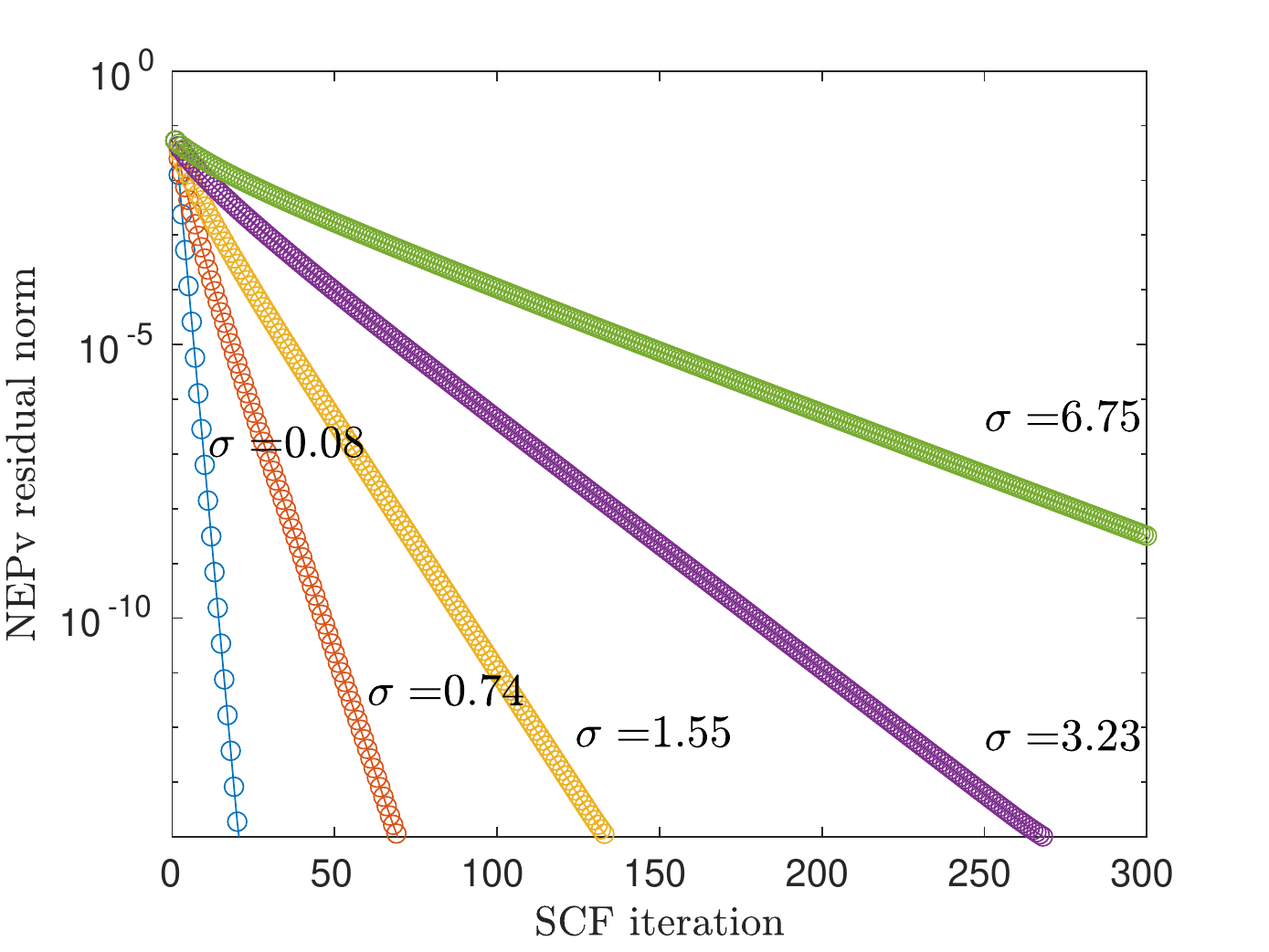}
\includegraphics[width=0.49\textwidth]{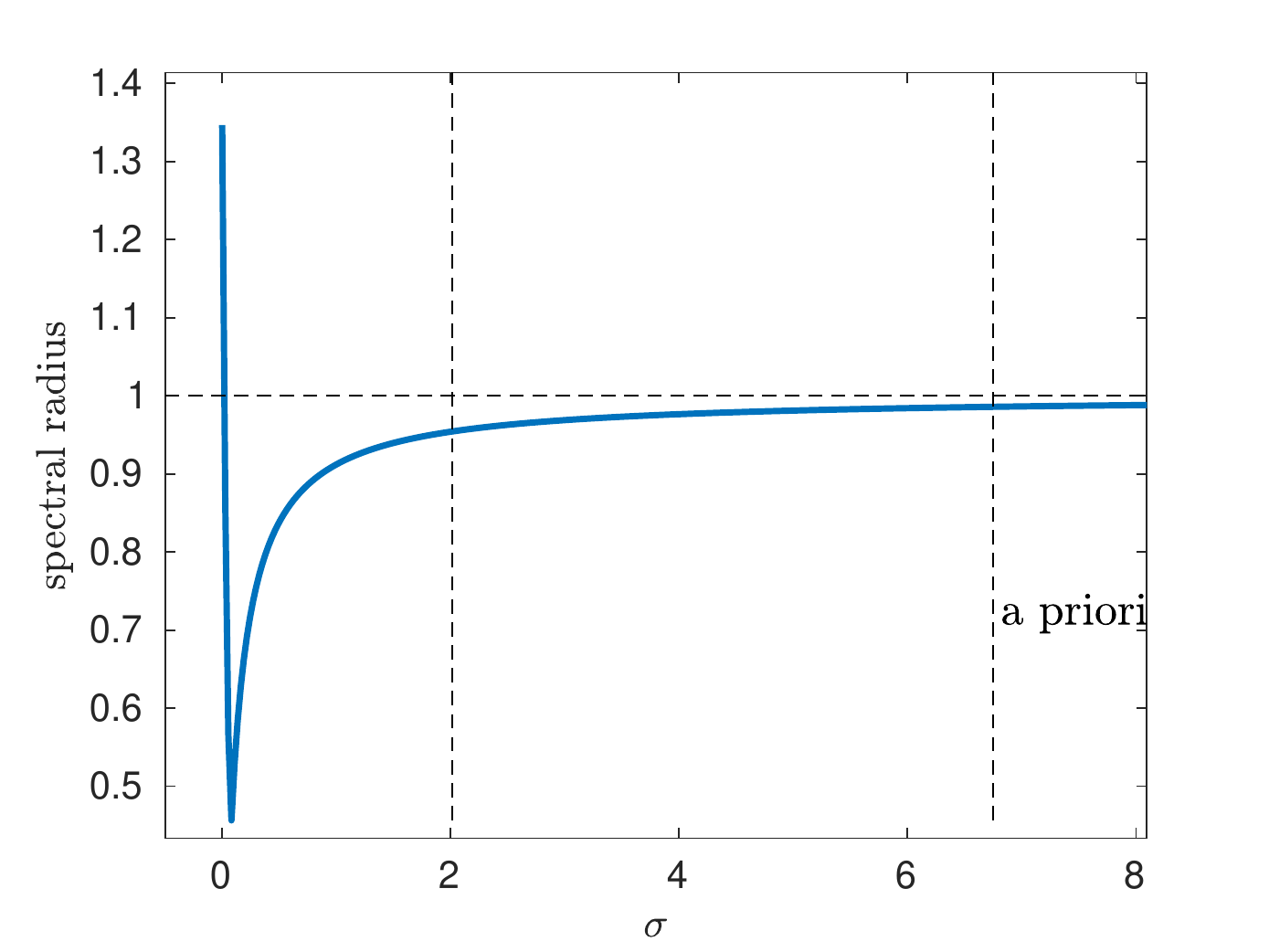}
\end{center}
\caption{Example~\ref{ex:gp2}:
convergence history of residual norm $\|H(V_i)V_i-V_i\Lambda_i\|_2$
by the level-shifted SCF \eqref{eq:levelshift} with selected $\sigma$ (left);
spectral radius of $\rho(\OP_{\sigma})$ as
shift $\sigma$ varies (right), where the first vertical dash line is 
$\sigma=\frac{\mu_{\max}}{2} -\delta_*$
suggested by~\eqref{eq:lsbnd} and the second is {\em a-priori} 
$\sigma =
\frac{1}{2}(3\beta+\|A_f\|_2)$ suggested by~\eqref{eq:sigmagp},
and the optimal shift is $\sigma\approx 0.08$.
The $H(V)$ is given by~\eqref{eq:gpe} with $\beta=3$.
}\label{fig:ex:gp2:rhos}
\end{figure}

\end{example}

\section{Concluding remarks}\label{sec:conclusion}
We have presented a comprehensive local convergence analysis 
of the plain SCF iteration and its level-shifted variant
for solving NEPv. The optimal convergence rate and
estimates are established.
Our analysis is in terms of the tangent-angle matrix to measure the
approximation error between consecutive SCF iterates  
and the intended target. 
We first established a
relation between the tangent-angle matrices associated 
with any two consecutive SCF approximates, and with it
we developed new formulas for the local error contraction factor and
the  asymptotic average contraction factor of SCF. 
The new formulas are sharper and
complement previously established local convergence results.
With the help of new convergence rate estimates, we
derive an explicit lower-bound on the shifting parameter 
to guarantee local convergence of the level-shifted SCF.
These results are numerically confirmed by examples from
applications in computational physics and chemistry.

Our analysis
does not cover more sophisticated variants of SCF 
such as the damped SCF~\cite{Cances:2000b}
and the Direct Inversion of 
Iterative Subspace (DIIS)~\cite{Pulay:1980,Pulay:1982}.
It is conceivable that by the tangent-angle matrix 
and the eigenspace perturbation theory, one can pursue 
the local convergence analysis of those variants. 

Finally, we note that we focused on NEPv \eqref{eq:nepv} 
satisfying the invariant property \eqref{eq:univar}. 
While this property is formulated as a result of some practically 
important applications,
there are recent emerging NEPv \eqref{eq:nepv} that do not have 
this property, such as the one in \cite{zhwb:2020}, and 
yet similar SCF iterations can be used. It would be interesting 
to find out what now determines the optimal
local convergence rate. This will be a future project to pursue. 


\appendix

\section{Adjoint operators} \label{app:adj}

The adjoint  $\OP^*$ in~\eqref{eq:lsreal} is derived as  follows.
\begin{align*}
\langle\, Y, \OP(Z)\,\rangle
    & = 2\alpha\,  \langle\, Y,\,\, D(V_*)\odot (V_{*\bot}^{\T}\, \Diag (L^{-1}\diag (V_{*\bot} Z V_*^{\T}))\,  V_* )\,\rangle\\
\text{\tiny(1)  by $ \langle Y, D\odot X\rangle  = \langle D\odot Y, X\rangle$\qquad}
    & = 2\alpha\,  \langle\, D(V_*)\odot Y,\,\, V_{*\bot}^{\T}\,
    \Diag (L^{-1}\diag (V_{*\bot} Z V_*^{\T}))\,  V_* \,\rangle\\
\text{\tiny (2) by $ \langle Y, AXB\rangle = \langle A^{\T}YB^{\T}, X\rangle $\qquad}
    & = 2\alpha\,  \langle\,  V_{*\bot}[ D(V_*)\odot Y]  V_*^{\T},\,\,
    \Diag (L^{-1}\diag (V_{*\bot} Z V_*^{\T})) \,\rangle\\
\text{\tiny (3) by $ \langle Y, \Diag  (b)\rangle = \langle \diag (Y),  b\rangle $\qquad}
    & = 2\alpha\,  \langle\,  \diag (V_{*\bot}[ D(V_*)\odot Y]  V_*^{\T}),\,\,
    L^{-1}\diag (V_{*\bot} Z V_*^{\T}) \,\rangle\\
\text{\tiny (4) by moving $L$ to the left\qquad}
    & = 2\alpha\,  \langle\,  L^{-1}\diag (V_{*\bot}[ D(V_*)\odot Y]  V_*^{\T}),\,\,
    \diag (V_{*\bot} Z V_*^{\T}) \,\rangle\\
\text{\tiny (5) by $ \langle \Diag  (b), Y\rangle = \langle b, \diag (Y)\rangle $\qquad}
    & = 2\alpha\,  \langle\,  \Diag (L^{-1}\diag (V_{*\bot}[D(V_*)\odot Y]  V_*^{\T})),\,\, V_{*\bot} Z V_*^{\T} \,\rangle.
\end{align*}
Finally, moving $V_{*\bot}$ and $V_*$ to the left
we obtain the formula~\eqref{eq:lsreal}.


The adjoint  $\OP^*$ in~\eqref{eq:lscmpx} is derived  analogously.
The first three steps are exactly the same as above, and so we continue with
\begin{align*}
    \langle\, Y, \OP(Z)\,\rangle
    & = 2\beta\,  \langle\, Y,\,\,
    D(V_*)\odot (V_{*\bot}^{\HH}\,
    \Diag ( \Re (\overline{V}_*\odot (V_{*\bot} Z)))\,  V_*)\,\rangle\\
\text{\tiny  by (1)--(3)   above \qquad}
    & = 2\beta\,  \langle\,  \diag (V_{*\bot}[ D(V_*)\odot Y]  V_*^{\HH}),\,\,
     \Re (\overline{V}_*\odot (V_{*\bot} Z)) \,\rangle\\
\text{\tiny  since it's vector inner product\qquad}
    & = 2\beta\,  \langle\,  \Re (\diag (V_{*\bot}[ D(V_*)\odot Y]
    V_*^{\HH})),\,\, \overline{V}_*\odot (V_{*\bot} Z) \,\rangle\\
\text{\tiny  by $ \langle Y, \Diag  (b)\rangle = \langle \diag (Y),  b\rangle $\qquad}
    & = 2\beta\,  \langle\,  \Re (\diag (V_{*\bot}[ D(V_*)\odot Y]
    V_*^{\HH}))\odot {V}_* ,\,\, V_{*\bot} Z \,\rangle.
\end{align*}
Finally, moving $V_{*\bot}$ to the left, we obtain the formula~\eqref{eq:lscmpx}.


\bibliographystyle{plain}
\bibliography{refs}

\end{document}